\definecolor{Black}{cmyk}{0,0,0,1}
\definecolor{OrangeRed}{cmyk}{0,0.6,1,0} 
\definecolor{DarkBlue}{cmyk}{1,1,0,0.20}
\definecolor{myblue}{rgb}{0.66,0.78,1.00}
\definecolor{Violet}{cmyk}{0.79,0.88,0,0}
\definecolor{Lavender}{cmyk}{0,0.48,0,0}
\newcommand\spiral{}
\def\spiral[#1](#2)(#3:#4:#5){
	\pgfmathsetmacro{\domain}{pi*#3/180+#4*2*pi}
	\draw [#1,shift={(#2)}, domain=0:\domain,variable=\t,smooth,samples=int(\domain/0.08)] plot ({\t r}: {#5*\t/\domain})
}
\newtheorem{theorem}{Theorem}[section]
\newtheorem{lemma}[theorem]{Lemma}
\newtheorem{corollary}[theorem]{Corollary}
\newtheorem{proposition}[theorem]{Proposition}
\theoremstyle{definition}
\newtheorem{definition}[theorem]{Definition}
\newtheorem{remark}[theorem]{Remark}
\newcommand{\bea}{\begin{eqnarray*}}
\newcommand{\eea}{\end{eqnarray*}}
\numberwithin{equation}{section}
\begin{document}
\title[Increasing sequences of complex manifolds]{Increasing sequences of complex manifolds  with uniform squeezing constants and their Bergman spaces}

\keywords{ Union problem, Squeezing function, Kobayashi metric, Bergman space}
\thanks{}
\subjclass[2020]{Primary: 32F45  ; Secondary :32H02, 32A36}
\author[Forn\ae ss and Pal]{John Erik Forn\ae ss and Ratna Pal}

\address{John Erik Forn\ae ss: Department of Mathematical Sciences, NTNU Trondheim, Norway}
\email{john.fornass@ntnu.no}

\address{Ratna Pal: Indian Institute of Science Education and Research Mohali, Knowledge City, Sector -81, Mohali, Punjab-140306, India}
\email{ratna.math@gmail.com, ratnapal@iisermohali.ac.in}

\begin{abstract}
For $d\geq 2$, we discuss $d$-dimensional complex manifolds $M$ that are the increasing union of bounded open sets $M_n$'s of $\mathbb{C}^d$ with a common uniform squeezing constant.  The description of $M$ is given in terms of the corank of the infinitesimal Kobayashi metric of $M$, which is shown to be  identically constant on $M$. The main result of this article says that if $M$ has full Kobayashi corank, then $M$ can be written as an increasing union of the unit ball; if $M$ has zero Kobayashi corank, then $M$ has a bounded realization with a uniform squeezing constant; and if $M$ has an intermediate Kobayashi corank, then $M$ has a local weak vector bundle structure. The above description of $M$ is used to show that the dimension of the Bergman space of $M \subseteq \mathbb{C}^d$ is either zero or infinity. This settles Wiegerinck's conjecture for those pseudoconvex domains in higher dimensions that are increasing union of bounded domains with a common uniform squeezing constant. 
\end{abstract}


\maketitle
\section{Introduction}
In this article we study the {\it union problem}: For $d\geq 2$, let $\{M_n\}$ be a sequence of $d$-dimensional complex manifolds such that $M_n \subset \subset M_{n+1}$, for all $n\geq 1$ and let $M=\bigcup_{n=1}^\infty M_n$. Then the union problem asks whether  $M$ can be described in terms of its exhausting  manifolds  $M_n$'s.  The problem goes back to the classical Levi Problem. This can be stated as the question of whether a domain $\Omega$ is Stein if it is locally Stein in the sense that
for every boundary point $p$ there is a neighbourhood $V_p$ so that $\Omega \cap V_p$ is Stein. Another way to interpret the notion of local is to assume that for every compact set $K\subseteq \Omega$ there is a neighbourhood which is Stein. Then one asks whether $\Omega$ is Stein. The answer to this question is affirmative in case $\Omega$ is an open subset of $\mathbb{C}^d$. In other words, if $\Omega$ is an open subset of $\mathbb{C}^d$ and if $\Omega$ is an increasing union of Stein open sets, then $\Omega$ is Stein (\cite{BS}). However, this result is no longer true when $\Omega$ is an arbitrary complex manifold.  In \cite{F}, the first author found  example of a complex manifold which is an increasing union of unit ball but fails to be Stein. 

In the present article, we demonstrate how the notion of {\it squeezing functions} can be employed effectively to explore certain aspects of the union problem. The notion of squeezing function was introduced by  Deng, Guan and, Zhang (\cite{DGZ})  almost a decade back.
Squeezing function of a given domain measures the largest Euclidean ball contained in the injective holomorphic images of the domain and one of its main use is to study the metric geometry of the underlying domain. Let us recall the definition of squeezing function.  In the following definition of squeezing function  and later in this article,  for $\rho>0$, $B(0;\rho)= \{z\in \mathbb{C}^d: \lVert z\rVert <\rho\}$.
\begin{definition}
Let $\Omega \subseteq \mathbb{C}^d$ be a bounded domain. For a point $p\in \Omega$ and a holomorphic embedding $f:\Omega \rightarrow B(0;1)$, we define
\[
S_\Omega(p;f)=\sup \{\rho: B(0;\rho)\subseteq f(\Omega)\},
\]
and the squeezing function at $p$, 
\[
S_\Omega(p)=\sup \{S_\Omega(p;f): f:\Omega \rightarrow B(0;1) \text{ holomorphic embedding} \}.
\]

\end{definition}
It follows immediately from the definition that  the squeezing functions are biholomorphic invariant.  Further they are positive and always bounded above by 1. A domain $\Omega$ is said to have a uniform squeezing constant if there exists an $r>0$ such that $S_\Omega(p)>r$, for all $p\in \Omega$.  Bounded domains with uniform squeezing constants are precisely the holomorphic homogeneous regular domains, introduced and christened by Liu  et al. (\cite{LSY1}). Many interesting bounded domains fall in this class such as homogeneous domains, bounded domains covering compact K\"{a}hler  manifolds, convex domains, strictly pseudocovex domains with $C^2$-smooth boundaries.

In the present article, we assume that the exhausting manifolds $M_n$'s are bounded open sets in $\mathbb{C}^d$ with a common positive uniform squeezing constant.  In particular, $M_n$'s are not necessarily biholomorphic to a fixed bounded domain. This is in contrast to the existing body of work on the union problem where one typically assumes the exhausting domains to be biholomorphic to a fixed domain. In one of the earlier work on this theme, the first author and Stout in \cite{FSt} showed that if $M$ is taut and if each $M_n$ is biholomorphic to the polydisc, then $M$ is also biholomorphic to the polydisc.  Later this result was generalized substantially by the first author and Sibony in \cite{FS}.  The main result of \cite{FS} (to be discussed briefly in the next paragraph) was then obtained for $C^2$-smooth strictly pseudoconvex bounded domains by Behrens \cite{Beh}. The recent papers  \cite{BBMV} by Balakumar et al. and  \cite{ThuVu} by Thu et al. describe the increasing unions of several geometrically interesting domains under the assumption that the final union $M$ is hyperbolic. Exhausting domains considered in the papers \cite{BBMV}, \cite{Beh},  and  \cite{ThuVu} are amenable to {\it scaling methods}, which plays the key role in obtaining structures of the final union $M$.

In spirit, our main result aligns with the one in \cite{FS}. We briefly discuss the main result of \cite{FS}.  Suppose $M_n$'s are biholomorphic to a fixed manifold $\Omega$ with $\Omega/ {\rm{Aut}(\Omega)}$ compact.  Here ${\rm{Aut}(\Omega)}$ denotes the automorphism group of $\Omega$ with the standard compact open topology. The union manifold $M$ is described in terms of the {\it infinitesimal Kobayashi metric} of $M$ (see Definition \ref{Kobayashi metric}), more precisely in terms of the corank (see Remark \ref{corank}) of the Kobayashi metric. It turns out that the corank is identically constant on $M$ in this case. Further the following holds: if the corank is zero then $M$ is biholomorphic to $\Omega$;  if the corank is one, then $M$ is locally trivial holomorphic fiber bundle over $A$ with fiber $\mathbb{C}$, where $A$ is a complex closed submanifold of $\Omega$; structure of $M$ may be very complicated if the corank is strictly greater than one. Note that the exhausting domains considered in \cite{FS}, if bounded, are regular holomorphic homogeneous, i.e., they  have uniform squeezing constants.  Thus the complex manifolds $M$ considered in \cite{FS} are subsumed by the one considered in this article when $\Omega$ is bounded. However, as in \cite{FS}, the Kobayashi metric plays a crucial role in this work.  Let us recall the definition of infinitesimal Kobayashi  metric and Kobabayashi pseudodistance. 

\begin{definition}\label{Kobayashi metric}
Let $M$ be a complex manifold. For $p\in M$ and $\xi$ a tangent vector of $M$ at $p$,  the infinitesimal Kobayashi metric is defined as follow:
\[
K_M(p,\xi)=\inf \{r>0: \rho: \Delta \rightarrow M \text{ holomorphic with }\rho(0)=p, \rho'(0)=\xi/r\},
\]
where $\Delta$ is the unit disc in $\mathbb{C}$. 
\end{definition}

The Kobayashi pseudodistance $d_K^M: M\times M \rightarrow \mathbb{R}$ is defined as follows: 
\[
d_K^M(p,q)=\inf_{\gamma} \int_0^1 K_M(\gamma(t),\gamma'(t)) dt,
\]
where $\gamma$ is a smooth curve joining $p,q\in M$.

Before going further, let us present two definitions pertinent to our main theorem.  In the following definitions, for $0\leq k <d$,  $B^{d-k}(0;1)$ denotes the $(d-k)$-dimensional unit ball.
\begin{definition}\label{weak vector bundle}
Let $M$ be a $d$-dimensional manifold and let $Z$ be a $k$ dimensional manifold where $1\leq k<d$.  Then $M$ is said to have a {\it weak vector bundle structure} of rank $(d-k)$ over $Z$ if there exists a holomorphic function $\psi: M \rightarrow Z$ such that for every $q_0\in Z$,  there exists an open neighbourhood $U$ of $q_0$ in $Z$ and a sequence of injective holomorphic maps $\lambda_n: U\times B^{d-k}(0;1) \rightarrow M_n \subseteq M$  satisfying
\begin{itemize}
\item[(I)] 
$\lambda_n\left(\{q\}\times {B}^{d-k}(0;1)\right) \subseteq  \lambda_{n+1}(\{q\}\times {B}^{d-k}(0;1))$
for all $n\geq 1$ and $\psi^{-1}\{q\}= \bigcup_{n=1}^\infty \lambda_n \left(\{q\} \times {B}^{d-k}(0;1)\right)$ for all $q\in U$;
\item[(II)] 
$\lambda_n(U\times {B}^{d-k}(0;1)) \subseteq  \lambda_{n+1}(U\times {B}^{d-k}(0;1))$
for all $n\geq 1$  and $\psi^{-1}(U)=\bigcup_{n=1}^\infty \lambda_n(U\times {B}^{d-k}(0;1))$;
\item[(III)]
$\psi \circ \lambda_n(q,w)=q$ for all $(q,w)\in U\times {B}^{d-k}(0;1)$.
\end{itemize}
\end{definition}
Definition \ref{weak vector bundle} is inspired by Lemma 4.5 in \cite{FS} which  shows that the union manifolds considered in \cite{FS} have  weak vector bundle  structures when the corank of the infinitesimal Kobayashi metric is strictly greater than one and strictly less than $d$. Definition \ref{weak local vector bundle}, a local version of Definition \ref{weak vector bundle}, is formulated in view of the structures of the union manifolds  we obtain in the present article under the same  assumption on the corank of the union manifolds as described above.
\begin{definition}\label{weak local vector bundle}
Let $M$ be a $d$-dimensional manifold and let $1\leq k<d$. Then $M$ is said to have a {\it local weak vector bundle structure} of rank $(d-k)$ if the following holds.  For each $p\in M$, there exists a holomorphic function
$\psi: M \rightarrow B(0;1)$ with $\psi(p)=0$ and a $k$-dimensional submanifold $Z_{\psi} \subseteq B(0;1)$ such that for every $q_0\in Z_{\psi}$, there exists a sufficiently small neighbourhood $U$ of $q_0$ in $Z_{\psi}$ and for each $n\geq 1$, there exists a injective holomorphic maps $\lambda_n: U\times B^{d-k}(0;1) \rightarrow M_n \subseteq M$ satisfying 

\begin{itemize}
\item[(I)] 
$\lambda_n\left(\{q\}\times {B}^{d-k}(0;1)\right) \subseteq  \lambda_{n+1}(\{q\}\times {B}^{d-k}(0;1))$
for all $n\geq 1$ and $\psi^{-1}\{q\}= \bigcup_{n=1}^\infty \lambda_m \left(\{q\} \times {B}^{d-k}(0;1)\right)$ for all $q\in U$;
\item[(II)] 
$\lambda_n(U\times {B}^{d-k}(0;1)) \subseteq  \lambda_{n+1}(U\times {B}^{d-k}(0;1))$
for all $n\geq 1$  and $\psi^{-1}(U)=\bigcup_{n=1}^\infty \lambda_n(U\times {B}^{d-k}(0;1))$;
\item[(III)]
$\psi \circ \lambda_n(q,w)=q$ for all $(q,w)\in U\times {B}^{d-k}(0;1)$.
\end{itemize}

\end{definition}
The main result of this article is as follows.
\begin{theorem}\label{main thm}
 For $d\geq 2$ and for each $n\geq 1$, let $M_n$ be biholomorphic to (possibly different) bounded open sets in $\mathbb{C}^d$ such that $M_n\subset \subset M_{n+1}$  and suppose $M=\bigcup_{n=1}^\infty M_n$. Suppose the family $\{M_n\}_{n\geq1}$ has a uniform squeezing constant, say $r>0$. Then for each point $p\in M$, the set $\mathcal{K}_p=\{\xi: K_M(p;\xi)=0\}$ forms a  vector space and the correspondence $p\mapsto k_p$, where $k_p$ is the dimension of $\mathcal{K}_p$, is identically constant on $M$. Further the following holds:
\begin{itemize}
\item[(I)] 
If $k_p\equiv d$ on $M$, then $M$ can be written as an increasing union of $d$-dimensional unit ball $B(0;1)$. Conversely, if $M$ is an increasing union of unit ball and if $\Omega$ is \textbf{any} bounded domain with a uniform squeezing constant,  then $M$ can be written as an increasing union of $\Omega$. Thus if $\Omega_1$ and $\Omega_2$ are two bounded domains with uniform squeezing constants $r_1$ and $r_2$, respectively and if $M$ is an increasing union of $\Omega_1$, then $M$ can be written as an increasing union of $\Omega_2$ and vice versa;
\item[(II)] 
If $k_p\equiv 0$ on $M$, then there exists an injective embedding of $M$ into the unit ball $B(0;1)$. Further, $r$ is a uniform squeezing constant of $M$;
\item[(III)] 
If $0<k_p\equiv (d-k)<d$ on $M$, then $M$ has a local weak vector bundle structure of rank $d-k$ (see Definition \ref{weak local vector bundle}).
\end{itemize}
\end{theorem}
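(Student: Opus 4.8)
The plan is to convert the Kobayashi metric on $M$ into a piece of limiting linear algebra controlled by the uniform squeezing constant, and then read off all three structures from the rank of the resulting limit map. Fix $p\in M$. For $n$ large we have $p\in M_n$ and $S_{M_n}(p)>r$, so there is a holomorphic embedding $f_n\colon M_n\to B(0;1)$ with $f_n(p)=0$ and $B(0;r)\subseteq f_n(M_n)\subseteq B(0;1)$. Biholomorphic invariance of $K_{M_n}$ together with monotonicity under the inclusions $B(0;r)\subseteq f_n(M_n)\subseteq B(0;1)$ and the identities $K_{B(0;1)}(0,v)=\|v\|$, $K_{B(0;r)}(0,v)=\tfrac1r\|v\|$ give $\|df_n(p)\xi\|\le K_{M_n}(p,\xi)\le \tfrac1r\|df_n(p)\xi\|$. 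Since $M_n\uparrow M$ the standard fact $K_{M_n}(p,\cdot)\downarrow K_M(p,\cdot)$ holds, and the bounds force the operator norms $\|df_n(p)\|$ to be uniformly bounded; passing to a subsequence $df_n(p)\to L_p$ and letting $n\to\infty$ yields $\|L_p\xi\|\le K_M(p,\xi)\le \tfrac1r\|L_p\xi\|$. Hence $\mathcal{K}_p=\ker L_p$ is a linear subspace and $k_p=d-\operatorname{rank}L_p$, which settles the first assertion.

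To prove that $k_p$ is constant I would use a global limit. For a fixed base point $p_0$ the maps $f_n$ are uniformly bounded on compacta of $M$, so by Montel a subsequence converges locally uniformly to a holomorphic $f\colon M\to B(0;1)$. For every holomorphic $g\colon M\to B(0;1)$ one has $\|dg(p)\xi\|\le K_{B(0;1)}(g(p),dg(p)\xi)\le K_M(p,\xi)$, so $\mathcal{K}_p\subseteq\ker dg(p)$ and $\operatorname{rank}dg(p)\le d-k_p$, with equality at $p_0$ for $g=f$ built from embeddings adapted to $p_0$. Lower semicontinuity of the rank of the holomorphic map $f$ then gives $k_p\le k_{p_0}$ near $p_0$, i.e. upper semicontinuity of $p\mapsto k_p$. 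The reverse inequality is the crux: a null direction $\xi\in\mathcal{K}_p$ produces disks $\rho_j\colon\Delta\to M$ with $\rho_j(0)=p$ and $\rho_j'(0)=\xi/\epsilon_j$, $\epsilon_j\to 0$, whose limits (rescaled, with the estimate above preventing collapse) sweep out a complex submanifold through $p$ on which $K_M$ vanishes identically, a parabolic leaf. I would then show these leaves form a holomorphic foliation with tangent spaces exactly the $\mathcal{K}_p$, so the leaf dimension, and hence $k_p$, is locally constant; with upper semicontinuity and connectedness this gives constancy. Establishing integrability of the null distribution and parabolicity of its leaves is the main technical obstacle.

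With constancy in hand the three cases fall out of this leaf/base dichotomy. For (I), $k_p\equiv d$ means $K_M\equiv 0$ and $M$ is a single parabolic leaf; rescaling the inverse squeezing maps $f_n^{-1}$ to unit scale and extracting a convergent subsequence yields biholomorphisms of balls exhausting $M$, so $M$ is an increasing union of copies of $B(0;1)$. The converse and the interchange of $\Omega_1,\Omega_2$ follow from a sandwiching lemma: any bounded domain $\Omega$ with a uniform squeezing constant is, up to biholomorphism and scaling, trapped between two concentric balls, so between any two sufficiently scale-separated nested balls of the exhaustion one can insert a biholomorphic copy of $\Omega$ (and conversely insert a ball between nested copies of $\Omega$); interlacing then reproduces $M$ as an increasing union of copies of $\Omega$. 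For (II), $k_p\equiv 0$ makes $K_M$ nondegenerate, and the two-sided comparison between $d_K^{M_n}$ and the Euclidean distance supplied by the adapted squeezing maps passes to the limit to show the global map $f\colon M\to B(0;1)$ separates points and is an immersion; the squeezing constants of the images $f_n(M_n)$ then carry over to show $r$ is a uniform squeezing constant of $f(M)$.

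Finally, for (III) with $0<d-k_p\equiv d-k<d$, I would take $\psi\colon M\to B(0;1)$ to be a rank-$k$ limit map whose fibers are exactly the parabolic leaves of the second paragraph, so that the local image $Z_\psi$ is a $k$-dimensional submanifold on which the metric is nondegenerate (the case-(II) picture), while each fiber $\psi^{-1}(q)$ is an increasing union of $(d-k)$-balls (the case-(I) picture). Parametrizing the leaves over a small chart $U\subseteq Z_\psi$ by the rescaled inverse squeezing maps produces injective holomorphic $\lambda_n\colon U\times B^{d-k}(0;1)\to M_n$, and one then checks directly that conditions (I)--(III) of Definition \ref{weak local vector bundle} hold. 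Throughout, the recurring difficulty is that $M$ need not be taut, so every limit extraction must be controlled through the global bounded maps above; and the single hardest point remains the holomorphic foliation by parabolic leaves that underlies both the constancy of $k_p$ and the fiber structure in (III).
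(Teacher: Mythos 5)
Your opening step coincides with the paper's: the sandwich $\lVert \psi_n'(p)\xi\rVert \le K_{M_n}(p,\xi) \le \tfrac{1}{r}\lVert \psi_n'(p)\xi\rVert$ from $B(0;r)\subseteq \psi_n(M_n)\subseteq B(0;1)$, passed to a limit map, is exactly how the paper identifies $\mathcal{K}_p$ with $\ker\psi'(p)$. But at the central point --- constancy of $k_p$ and the fiber structure in (III) --- you substitute a holomorphic foliation by ``parabolic leaves'' obtained by integrating the null distribution of $K_M$, and you yourself flag integrability and parabolicity as unproven. That is a genuine gap, not a deferrable technicality: pointwise null directions of the Kobayashi metric do not in general integrate to complex submanifolds, and nothing in your sketch (limits of disks with blowing-up derivatives) actually produces the leaves, their holomorphic dependence on the base point, or the local constancy of their dimension; since both your constancy argument and your case (III) rest on this foliation, they are incomplete. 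The paper avoids foliation theory entirely via the Forn\ae ss--Sibony retraction device, which is the missing engine in your writeup: with $\phi_n=\psi_n^{-1}$ one forms $\alpha_n=\psi\circ\phi_n$ on $B(0;r)$, extracts a limit $\alpha$ satisfying $\alpha\circ\psi=\psi$ on $\psi^{-1}(B(0;r))$, and studies the fixed-point variety $Z=\{q:\alpha(q)=q\}$, of dimension at most $k$ and a $k$-dimensional manifold where that dimension is attained. Chain-rule bookkeeping (the rank of $\alpha_n=\psi\circ\phi_n$ is at least $k$ near $Z$ for large $n$, hence the rank of $\psi$ is at least $k$ at nearby points of $M$) gives local constancy of the rank with no integrability claim, and the explicit chart $\theta=(\alpha,F_1/\epsilon)$ with $\lambda_m=\phi_m\circ\theta_m^{-1}$ delivers the maps of Definition \ref{weak local vector bundle}, whose nesting and exhaustion properties you never verify. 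The fully degenerate case also needs its own propagation argument, which your leaf picture does not cover: the paper shows that if $K_M(p;\cdot)\equiv 0$ at one point then the limit $\psi$ is constant (via a Schwarz--Pick estimate on the leading coefficient), so $\psi_n(K)\subseteq B(0;r)\subseteq\psi_n(M_n)$ for every compact $K$, forcing $K_M\le \lim K_{B(0;r)}(\psi_n(\cdot);\cdot)=0$ everywhere.

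Two smaller soft spots. In (II), injectivity of the limit cannot be read off from a distance comparison ``passing to the limit'': limits of injective maps need not be injective, and the estimate available from $\psi_n(M_n)\subseteq B(0;1)$, namely $\lVert\psi_n(x)-\psi_n(y)\rVert\le d_K^{M_n}(x,y)$, goes the wrong way. The paper proves injectivity by a Rouch\'e/Hurwitz argument ($\tilde\psi_n=\psi_n-\psi_n(z_2)$ is zero-free near $z_1$ while the limit has an isolated zero there), and full rank of $\psi'$ at points other than $p$ also comes from Hurwitz, since your two-sided comparison at $z\ne p$ would require $\psi_n(z)\in B(0;r)$, which is not guaranteed. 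In (I), your interlacing sketch is the paper's Steps 2--3, but the mechanism you leave implicit is essential: identical vanishing of the Kobayashi distance lets one pass to a subsequence with $d_K^{M_{n+1}}(0,z)<\epsilon_n$ for $z\in M_n$, whence $\psi_{n+1}(M_n)\subseteq B(0;\epsilon_n)\subseteq B(0;r)$ and $M_n\subseteq\psi_{n+1}^{-1}(B(0;r))\subseteq M_{n+1}$; it is this $r$-core property --- not mere nesting of balls --- that makes the inserted copies of $\Omega$ (squeezed between $B(0;r)$ and $B(0;1)$) increase and exhaust $M$.
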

A couple of remarks are in order.
\begin{remark} \label{corank}
The natural number $k_p$ at the point $p\in M$ is called the {\it Kobayashi corank} of $M$ at the point $p$. The number $d-k_p$ is called the {\it Kobayashi rank} of $M$ at the point $p$.
\end{remark}
\begin{remark}
Manifolds that are increasing union of balls and on which the Kobayashi metric vanishes identically is of particular interest.  Till date only two such mutually non-biholomorphic classes of manifolds are identified: Fatou-Bieberbach domains and Short $\mathbb{C}^d$'s. The existence of other classes of manifolds in this category is strongly believed and identifying these manifolds has been a long time pursuit. The part (I) of Theorem \ref{main thm} attempts to give a flexibility result. Manifolds on which Kobayashi metric does not vanish identically may not enjoy this sort of flexibility under the same assumption. For example (see \cite{FS}), if $M$ is an increasing union of balls or polydiscs, then $M$ is biholomorphic to ball or polydiscs, respectively, when $M$ is hyperbolic, i.e., when $k_p\equiv 0$ on $M$;  $M$ is biholomorphic to a cylinder with fiber $\mathbb{C}$ and base $(d-1)$-dimensional ball or $(d-1)$-dimensional polydisc, respectively, when Kobayashi corank of $M$ is one, i.e., $k_p\equiv 1$ on $M$. 
\end{remark}
Now we discuss two interesting corollaries of Theorem \ref{main thm}. 
The first one is a result of Behrens (\cite{Beh}). Behrens'  proof relies on techniques of \cite{FS} and Pinchuk's scaling method. We give an alternative proof of her result combining techniques of \cite{FS} and properties of squeezing functions. 
\begin{corollary}\label{Behrens}
Let $d\geq 2$. Let $M$ be a $d$-dimensional manifold and let $M$ be an increasing union of a fixed bounded $C^2$-smooth strictly pseudoconvex domain $\Omega$.  Then for each point $p\in M$, the set $\mathcal{K}_p=\{\xi: K_M(p;\xi)=0\}$ forms a  vector space and the function $p\mapsto k_p$, where $k_p$ is the dimension of $\mathcal{K}_p$, is identically constant on $M$. Moreover, 

\begin{itemize}
\item[(I)] 
If $k_p\equiv 0$ on $M$, then $M$ is either biholomorphic to $\Omega$ or $M$ is biholomorphic to the unit ball $B(0;1)$;
\item[(II)] 
If $0<k_p\equiv (d-k)<d$ on $M$, then $M$ has a  weak vector bundle structure of rank $k$ either over a submanifold $A$ of $\Omega$ or over a submanifold $A$ of the unit ball $B(0;1)$.
\end{itemize}
\end{corollary}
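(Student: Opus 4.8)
The plan is to deduce Corollary \ref{Behrens} from Theorem \ref{main thm} by supplying one extra geometric input about strictly pseudoconvex domains. First I would record that a bounded $C^2$-smooth strictly pseudoconvex domain $\Omega$ possesses a uniform squeezing constant; this is one of the basic classes of holomorphic homogeneous regular domains noted after the definition of the squeezing function (see \cite{DGZ2}, \cite{LSY1}). Consequently the constant family $M_n$ biholomorphic to $\Omega$ trivially has a common uniform squeezing constant $r>0$, so Theorem \ref{main thm} applies verbatim. This immediately yields the first assertion of the corollary: for each $p$ the set $\mathcal{K}_p$ is a vector space and $p\mapsto k_p$ is constant on $M$. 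It also tells us that exactly one of the three regimes of Theorem \ref{main thm} occurs; the remaining work is to \emph{identify the abstract models} in the two regimes listed in the corollary (the regime $k_p\equiv d$ requires no refinement, being handled directly by Theorem \ref{main thm}(I)).

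For part (I), where $k_p\equiv 0$, Theorem \ref{main thm}(II) already realizes $M$ as a bounded domain $\widehat M\subseteq B(0;1)$ carrying the uniform squeezing constant $r$; in particular $M$ is hyperbolic. I would then follow the strategy of \cite{FS}: writing $\phi_n\colon \Omega\to M_n\subseteq M$ for the exhausting biholomorphisms and passing to the bounded realization, the uniform squeezing constant makes the Kobayashi and Euclidean metrics comparable, which gives equicontinuity and hence normality of the families $\{\phi_n\}$ and $\{\phi_n^{-1}\}$ on compacta. Extracting limits produces a holomorphic map relating $\Omega$ and $\widehat M$, and the dichotomy of the corollary is read off from the limit's behaviour. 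This is precisely where I would \emph{replace Pinchuk scaling by squeezing-function rigidity}: if the limit is nondegenerate and interior, it is a biholomorphism and $M$ is biholomorphic to $\Omega$; if instead the exhaustion accumulates at the (strictly pseudoconvex) boundary, then, because the squeezing function tends to $1$ near a $C^2$ strictly pseudoconvex boundary point, a compactness argument using the \emph{uniform} lower bound $r$ forces the squeezing function of $\widehat M$ to attain the value $1$ at an interior point. By the rigidity theorem for squeezing functions (\cite{DGZ2}), $\widehat M$, and hence $M$, is then biholomorphic to $B(0;1)$.

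For part (II), where $0<k_p\equiv d-k<d$, I would start from the local weak vector bundle structure of rank $d-k$ furnished by Theorem \ref{main thm}(III) and globalize its base, in the spirit of Lemma 4.5 of \cite{FS}. The degenerate directions $\mathcal{K}_p$ integrate to the fibers, while the transverse quotient carries a \emph{nondegenerate} Kobayashi metric; thus the base is a $k$-dimensional hyperbolic manifold which is itself an increasing union with a common uniform squeezing constant inherited from the structure over $r$. Applying the corank-zero analysis of part (I) to this base identifies it globally as a submanifold $A$ of $\Omega$ or of $B(0;1)$. Patching the local trivializations $\lambda_n$ of Definition \ref{weak local vector bundle} over this global base then upgrades the local weak vector bundle to an honest weak vector bundle over $A$ in the sense of Definition \ref{weak vector bundle}, which is the assertion of the corollary.

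The main obstacle I anticipate is the dichotomy in part (I) — specifically, making rigorous the claim that boundary accumulation of the exhaustion forces the squeezing function of $\widehat M$ to reach $1$ at an interior point. One must control the normal-family limits carefully: rule out total degeneration of the limit maps (here hyperbolicity together with the uniform squeezing constant is essential) and convert the local ball-like behaviour near a strictly pseudoconvex boundary point into a genuine interior value $S_{\widehat M}=1$, so that the rigidity of \cite{DGZ2} can be invoked. The globalization in part (II) should be comparatively routine once part (I) is in hand, since it only reassembles the already-constructed local data over the base supplied by the corank-zero case.
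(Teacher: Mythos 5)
Your reduction to Theorem \ref{main thm} and the first half of your part (I) follow the paper closely: strictly pseudoconvex $C^2$ domains do have uniform squeezing constants, the constancy of $k_p$ is immediate, and the dichotomy ``orbit accumulates inside $\Omega$ versus on $\partial\Omega$'' is exactly the paper's case split. Two remarks on part (I). For the interior case, the paper does not rerun the normal-families argument of \cite{FS}; it quotes \cite[Lemma 3.1]{MV}, which already packages the nondegeneracy issue you would otherwise have to handle (hyperbolicity of $M$, furnished by Theorem \ref{main thm}(II), is the hypothesis that makes it applicable). For the boundary case, the step you flag as your ``main obstacle'' --- converting boundary accumulation into $S_{\widehat M}=1$ at an interior point --- is not something you need a new compactness argument for: it is exactly \cite[Theorem 2.1]{DGZ1}, which gives $S_M(z)=\lim_{n}S_{M_n}(z)$ for an increasing union admitting a bounded realization (the bounded realization again coming from Theorem \ref{main thm}(II)). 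Combined with $S_\Omega(\psi_n(z))\to 1$ near a strictly pseudoconvex boundary (\cite[Theorem 1.3]{DGZ1}) and the rigidity theorem of \cite{DGZ2}, this closes your gap; as written, your proof of part (I) is incomplete at precisely this point, though the missing ingredient is a citation rather than a new idea.

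Part (II) is where your route genuinely fails. You propose to form a ``transverse quotient'' of $M$ by the degenerate directions, claim this base is itself an increasing union with a uniform squeezing constant, apply the corank-zero analysis to it, and then patch the local trivializations $\lambda_n$. Three objections. First, no global quotient is available: the degenerate directions assemble into fibers only \emph{locally} --- that is precisely why Theorem \ref{main thm}(III) yields only a \emph{local} weak vector bundle structure, with $\psi$ and $Z_\psi$ depending on the chosen base point $p$ --- so there is no global base manifold to which part (I) could be applied, and no given exhaustion of it by domains with a common squeezing constant. Second, even granting such a base, the corank-zero analysis would identify it up to biholomorphism (as $\Omega$, which has the wrong dimension $d\neq k$, or as a ball), not as a \emph{submanifold} $A$ of $\Omega$ or of $B(0;1)$; in the corollary, $A$ arises as the fixed-point set of a holomorphic retraction of the model domain, not from a classification of an abstract base. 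Third, ``patching the $\lambda_n$'' over a global base is exactly the obstruction the general theorem cannot overcome. The paper instead reuses the orbit dichotomy: if some orbit $\{g_n(p)\}$ is compactly contained in $\Omega$, the Section 4 machinery of \cite{FS} applies verbatim and produces the weak vector bundle over the fixed-point submanifold $A\subseteq\Omega$ of the limit retraction; if every orbit goes to $\partial\Omega$, then $S_\Omega(g_n(p))\to 1$, so in the proof of Theorem \ref{main thm}(III) the embeddings can be chosen with squeezing constants tending to $1$, the retraction $\alpha$ becomes defined on all of $B(0;1)$ rather than on $B(0;r)$, and the construction globalizes to give a weak vector bundle over a submanifold $A\subseteq B(0;1)$. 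In short: the global structure in the corollary is bought by the boundary behaviour of the squeezing function of a strictly pseudoconvex domain, not by assembling the local data of the theorem, and your proposal is missing this mechanism.
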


\begin{remark}
The same conclusion as of Corollary \ref{Behrens} can be drawn if each $M_n$ is biholomorphic to a fixed bounded domain $\Omega$ and the squeezing function of $\Omega$ approaches to $1$ near boundary. 
\end{remark}

The collection of square integrable holomorphic functions of a domain $D$ is called the Bergman space of $D$ and we denote it by $L^2{(D)}$.  In \cite{Wieg}, Wiegerinck showed  that the dimension of Bergman space of any planar domain and thus of any pseudoconvex domain in $\mathbb{C}$ is either zero or infinity. Whether this dichotomy continues to hold in higher dimensions for pseudoconvex domains is still unsettled. However in higher dimensions examples of nonpseudoconvex  domains with finite dimensional Bergman spaces are known.  A considerable amount of effort has been put to identify several classes of pseudoconvex domains with the dimensions of Bergman spaces either zero or infinity.  The following corollary identifies yet another class of domains in higher dimensions with zero or infinite dimensional Bergman spaces. 
\begin{corollary}\label{main cor}
Let $M\subseteq \mathbb{C}^d$ be an increasing union of bounded domain in $\mathbb{C}^d$ with a fixed common positive squeezing constant. Then the dimension of the Bergman space 
$$L^2(M)=\{f: M\rightarrow \mathbb{C} \text{ holomorphic }: \int_M {\lvert f\rvert}^2 <\infty\}$$
of $M$ is either zero or infinity. Further, in case the corank of the infinitesimal Kobayashi metric of $M$ is strictly greater than zero,  the Euclidean volume of $M$ is infinity.  
\end{corollary}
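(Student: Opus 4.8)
The plan is to feed each of the three structural alternatives of Theorem \ref{main thm} into the Bergman-space and volume analysis, using two standard facts throughout. First, the reproducing inequality $|f(z)|^2\leq \mathcal{B}_M(z,z)\,\lVert f\rVert_{L^2(M)}^2$, where $\mathcal{B}_M$ denotes the Bergman kernel, so that $L^2(M)=\{0\}$ exactly when $\mathcal{B}_M(z,z)\equiv 0$. Second, that for an increasing exhaustion $\Omega_m\uparrow M$ by open subsets of $\C^d$ one has $\mathcal{B}_{\Omega_m}(z,z)\downarrow \mathcal{B}_M(z,z)$ (extract a locally uniform limit of the $L^2$-normalized extremal functions, which are locally bounded by the reproducing inequality). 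I will also use that $\dim L^2$ is a biholomorphic invariant, via $f\mapsto (f\circ\Phi)\det\Phi'$.

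Case (II) is immediate: by part (II) of Theorem \ref{main thm}, $M$ is biholomorphic to a bounded domain $\widetilde M\subseteq B(0;1)$, and a bounded domain carries all polynomials in its Bergman space, so $\dim L^2(M)=\dim L^2(\widetilde M)=\infty$; here the corank is zero, so there is no volume assertion. For Case (I), I would write $M=\bigcup_m\Omega_m$ with biholomorphisms $\phi_m:B(0;1)\to\Omega_m\subseteq\C^d$ from part (I) and fix $p\in M$, arranging $\phi_m(0)=p$. Since $K_M\equiv 0$ and $K_{\Omega_m}(p,\xi)=\lVert(\phi_m'(0))^{-1}\xi\rVert\downarrow K_M(p,\xi)=0$ for every $\xi$, the operators $(\phi_m'(0))^{-1}$ tend to $0$, whence $\lvert\det\phi_m'(0)\rvert\to\infty$. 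The transformation rule then gives $\mathcal{B}_{\Omega_m}(p,p)=\mathcal{B}_{B(0;1)}(0,0)\,\lvert\det\phi_m'(0)\rvert^{-2}\to 0$, so $\mathcal{B}_M(p,p)=0$; as $p$ was arbitrary, $\mathcal{B}_M\equiv 0$ and hence $L^2(M)=\{0\}$. In particular the constants are not in $L^2(M)$, so $\mathrm{Vol}(M)=\infty$, settling both assertions in this case.

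For Case (III) I would establish the dichotomy by a multiplication-operator argument. If $1\leq\dim L^2(M)=N<\infty$, then $g\mapsto(f\mapsto gf)$ is an injective algebra homomorphism $H^\infty(M)\hookrightarrow \mathrm{End}(L^2(M))$, forcing $\dim_{\C}H^\infty(M)\leq N^2<\infty$; but the global map $\psi:M\to B(0;1)$ of Definition \ref{weak local vector bundle} has image containing the $k$-dimensional submanifold $Z_\psi$ with $k\geq 1$, so $\{P\circ\psi: P\text{ a polynomial}\}$ is an infinite-dimensional subspace of $H^\infty(M)$, a contradiction; hence $\dim L^2(M)\in\{0,\infty\}$. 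For the volume I would argue through the flat leaves: each fibre $\psi^{-1}\{q\}=\bigcup_m\lambda_m(\{q\}\times B^{d-k}(0;1))$ is an increasing union of $(d-k)$-balls on which the Kobayashi metric vanishes, so, repeating the Case (I) computation for the plurisubharmonic induced-volume density $\lVert\wedge^{d-k}\di_w\lambda_m\rVert^2$ together with the sub-mean-value inequality, each fibre has infinite $(2(d-k))$-dimensional Euclidean volume; a Fubini/coarea integration over the positive-measure base $Z_\psi$ then yields $\mathrm{Vol}(M)=\infty$.

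The main obstacle is the volume statement in Case (III). The reparametrizations $\lambda_m$ move the centre $\lambda_m(q,0)$ within the leaf as $m$ grows, so the clean ``derivative blows up at a fixed base point'' mechanism of Case (I) does not transfer verbatim; one must either track a fixed point $p^*$ in the leaf and control the hyperbolic location of its $m$-th preimage (to keep the sub-mean-value radius bounded below), or invoke monotonicity of volume for complex submanifolds after showing the leaf is unbounded in $\C^d$ --- which itself requires that the leaf, being closed only in $M$ and not necessarily in $\C^d$, still accumulates enough definite volume near infinity. A secondary point to pin down is that the fibres genuinely carry a vanishing Kobayashi metric, i.e.\ that they are the integral leaves of the Kobayashi-flat distribution $\mathcal{K}$ rather than merely an exhaustion by balls; this should follow from the construction underlying part (III) of Theorem \ref{main thm}.
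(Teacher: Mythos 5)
Your Cases (I) and (II) are correct, and Case (I) takes a genuinely different route from the paper: where you run the standard Bergman machinery (monotonicity of the kernel under inclusion plus the transformation rule $\mathcal{B}_{\Omega_m}(p,p)=\mathcal{B}_{B(0;1)}(0,0)\lvert\det\phi_m'(0)\rvert^{-2}$, with $\lvert\det\phi_m'(0)\rvert\to\infty$ forced by $K_{\Omega_m}(p,\cdot)\downarrow K_M(p,\cdot)\equiv 0$), the paper simply cites \cite[Theorem 1.2]{FP}; your version is self-contained and only needs monotonicity, not full Ramadanov convergence. Your Case (III) dichotomy via the injective algebra map $H^\infty(M)\hookrightarrow\mathrm{End}(L^2(M))$ together with the nonconstant bounded function $\psi$ is also correct, and is softer than what the paper does: the paper proves the stronger statement $L^2(M)=\{0\}$ by taking any $f\in L^2$ with $f(p)\neq 0$, pulling back under $\lambda_m$, showing $\lvert\mathrm{Jac}\,\lambda_m(0)\rvert\to\infty$ at the fixed point $p=\lambda_m(0)$, and deriving a contradiction from the sub-mean-value inequality applied to $\lvert f\circ\lambda_m\rvert^2\lvert\mathrm{Jac}\,\lambda_m\rvert^2$ against the uniform bound $\int_{U\times B^{d-k}(0;1)}\lvert f\circ\lambda_m\rvert^2\lvert\mathrm{Jac}\,\lambda_m\rvert^2\leq\int_{\psi^{-1}(U)}\lvert f\rvert^2$.

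The genuine gap is exactly where you flag it: the infinite-volume assertion in Case (III), and neither of your two escape routes closes it. The ``moving centre'' obstacle does not actually exist in the paper's construction: item (iii) in the proof of part (III) renormalizes $\lambda_m$ so that $\lambda_{m+1}^{-1}\circ\lambda_m(q,0)=(q,0)$, hence the centres $\lambda_m(q,0)$ are one fixed point of the fibre for all $m$, and Proposition \ref{id van Kob} (via the dilation $\lambda_{m+1}\circ A\circ\lambda_{m+1}^{-1}$) then gives the vanishing fibre Kobayashi metric you were hoping to invoke --- blind, you could not know this normalization was available, but without it your fibre argument stalls. More seriously, even granting infinite $2(d-k)$-volume of every fibre, your Fubini/coarea step fails as stated: the coarea formula for $\psi$ carries the normal Jacobian of $\psi$, which is neither bounded above on $\psi^{-1}(U)$ nor below along fibres, so $\int_U\mathcal{H}^{2(d-k)}(\psi^{-1}\{q\})\,dq=\infty$ yields no lower bound on $\mathrm{Vol}(M)$. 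The paper sidesteps fibre volumes entirely: since its Case (III) argument shows $L^2(\psi^{-1}(U))=\{0\}$ and $M\subseteq\C^d$, finite Euclidean volume would make $f\equiv 1$ a nonzero element of $L^2(M)$ --- equivalently, sub-mean-value applied to the plurisubharmonic function $\lvert\mathrm{Jac}\,\lambda_m\rvert^2$ gives $\lvert\mathrm{Jac}\,\lambda_m(0)\rvert^2\leq C\,\mathrm{Vol}(M)$ with the left side tending to infinity. Note finally that your soft multiplication-operator dichotomy can never produce the volume statement on its own, since it does not exclude $\dim L^2(M)=\infty$ with finite volume; some form of the vanishing argument at a fixed centre is unavoidable.
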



\section{$M$ with full Kobayashi corank at some point}
In this section we prove the first part of Theorem \ref{main thm}. 
\subsection{Proof of [(I), Theorem \ref{main thm}]} We complete the proof in three steps.


\medskip 
\noindent
{\it Step 1:} In this step we prove that if there exists a point $p\in M$ for which $K_M(p;\xi)=0$, for all 
$\xi \in \mathbb{C}^k$, then the Kobayashi metric vanishes identically on $M$.  Without loss of generality, we assume $p$ to be the origin. Let $L>>1$. Then for $n$ sufficiently large and for any tangent vector $\xi$ of length one, there is a holomorphic map $g_{\xi}:\Delta\rightarrow M_n$
so that $g_{\xi}(0)=0, g_{\xi}'(0)=L\xi$. Here $\Delta$ is the unit disc in $\mathbb{C}$.  
 
Since each $M_n$ has fixed squeezing constant $r>0$, for each $n\geq 1$, there exists injective holomorphic map $\psi_n:M_n\rightarrow B(0;1)$ such that $\psi_n(0)=0$ and $B(0;r) \subseteq \psi_n(M_n)$.  Pick a tangent vector $\xi$ at $0\in M$. Consider the composition $\psi_n\circ g_\xi:\Delta \rightarrow B(0;1)$. By the Schwarz Lemma
the derivative $\|\psi'_n(0)(L\xi)\|\leq 1.$ Hence $\|\psi_n'(0)(\xi)\|\leq 1/L$.  Without loss of generality,  we assume that $\psi_n$ converges uniformly on compacts of $M$ to $\psi$. Thus $\psi'(0)=0$.

{\it Claim:} For any set $K \subset  \subset M$ and  $0<\delta<1$, 
$\psi_n(K)\subseteq B(0,\delta)$, for all $n$ sufficiently large.

The claim follows if we we prove  that $\psi$ is identically constant.  Let us assume on contrary that $\psi$ is non-constant.  Without loss of generality,  we can assume 
$$
\psi(z,0')=(a_1z^{l_1}+\cdots, a_2 z^{l_2}+\cdots, a_d z^{l_d}+\cdots),
$$ 
for $0'=(0,\ldots,0)\in \mathbb{C}^{d-1}$ and for $z\in \mathbb{C}$ with $\lvert z \rvert<\epsilon$, where  $\epsilon>0$ sufficiently small. We can also assume that  $2\le l_1,\ldots, l_d < \infty$ and $a_1 \neq 0.$ Pick arbitrarily large $L>0$ and a map $f$ from $\Delta$ to $M$ so that $f(0)=p$ and $f'(0)=(L,0')$. Then $\psi\circ f(t) = (a_1L^{l_1}t^{l_1}+\cdots, \dots)$.
Since by generalized Schwarz-Pick lemma, it follows that the modulus of $a_1L^{l_1}$ must be at most $1$ (see \cite [Proposition 1.1.2] {JP}), we get a contradiction when $L$ is sufficiently large. Thus $\psi \equiv 0$ on $M$. Therefore the claim follows. 

Fix a $p\in M$ and a $\xi \in \mathbb{C}^d$. Now note that 
\begin{eqnarray*}
K_M(p;\xi)&=&\lim_{n\rightarrow \infty} K_{M_n}(p;\xi)\\
&=&\lim_{n\rightarrow \infty} K_{\psi_n(M_n)}(\psi_n(p);\psi_n'(p)(\xi))\\
&\leq& \lim_{n\rightarrow \infty} K_{B(0;r)}(\psi_n(p);\psi_n'(p)(\xi)) =0,
\end{eqnarray*}
as $n\rightarrow \infty$ and the last equality holds since $\psi_n(p), \psi_n'(p)(\xi)\rightarrow 0$ as $n\rightarrow \infty$. This shows that the Kobayashi metric vanishes identically on $M$.

{\it Step 2: } In this step we prove that $M$ can be written as an increasing union of unit balls. Recall that $0\in M_n$ for all $n\geq 1$. Now for each fixed $n$, we can find an $L_n>0$ such that for any $z\in M_n$ there exists a curve $\gamma_z \subseteq M_n$ joining $0$ and $z$ with $\lVert \gamma_z'(t)\rVert<L_n$ for $0\leq t \leq 1$. Therefore, by replacing $M_n$'s possibly by a subsequence, we can assume that the infinitesimal Kobayashi distance of $M_{n+1}$ restricted to $M_n\times B(0;L_n)$ is smaller than $\epsilon_n  \downarrow 0$. Thus without loss of generality, we can assume that the  Kobayashi distance  between the origin and any point $z\in M_n$ is smaller than $\epsilon_n$, i.e., $d_{K}^{M_{n+1}}(0,z)<\epsilon_n$, for all $z\in M_n$ and for all $n\geq 1$. Here $d_{K}^{M_{n+1}}$ denotes the Kobayashi distance on $M_{n+1}$. 


Recall that for each $n\geq 1$,  $r>0$ is the uniform squeezing constant of $M_n$.  Thus there exists injective holomorphic map $\psi_n:M_n \rightarrow B(0;1)$ such that $B(0;r) \subseteq \psi_n(M_n)$ with $\psi_n(0)=0$.  Therefore 
\begin{eqnarray} \label{squ}
\lVert \psi_{n+1}(z)\rVert \leq d_K^{B(0;1)}(\psi_{n+1}(z),0)\leq d_K^{M_{n+1}}(z,0)<\epsilon_n<r,
\end{eqnarray}
for all $z\in M_n$ and possibly for $n$ large enough. Thus 
$$M_n \subseteq \psi_{n+1}^{-1}(B(0;r)) \subseteq M_{n+1}.$$
Thus we can write $M$ as an increasing union of Euclidean balls. 

{\it Step 3:} 
In this step we prove that if $M$ is an increasing union of unit balls in $\mathbb{C}^d$ and if  $\Omega\subseteq \mathbb{C}^d$ is  a  bounded domain, then $M$ can be written as an increasing union of $\Omega$. 

Let $M=\bigcup_{n=1}^\infty U_n$ with $U_n \subset \subset U_{n+1}$, where each $U_n$ is biholomorphic to the unit ball $B(0;1)$. Without loss of generality, let us assume that $0\in U_n$ for all $n\geq 1$. Let $\psi_n: B(0;1) \rightarrow U_n$ be a biholomorphism with $\psi_n(0)=0$.  Let $\epsilon_n\downarrow 0$, then as in (\ref{squ}), we have
\begin{equation} \label{Omega}
\lVert \psi_{n+1}^{-1}(z)\rVert\leq K_{B(0;1)}(\psi_{n+1}^{-1}(z),0)=K_{U_{n+1}}(z,0)<\epsilon_n
\end{equation}
for all $z\in U_n$.  
Fix a point $p\in \Omega$ and let $S_\Omega(p)=r$. Then there exists an injective holomorphic 
function $\psi: \Omega \rightarrow B(0;1)$ with $\psi(p)=0$ and $B(0;r)\subseteq \psi(\Omega)$. Therefore, by (\ref{Omega}), $\psi_{n+1}^{-1}(U_n) \subseteq B(0;r) \subseteq \psi(\Omega)$, i.e.,  $U_{n} \subseteq (\psi_{n+1}\circ \psi)(\Omega)$. 
Next we prove 
\[
(\psi_{n+1}\circ \psi)(\Omega) \subseteq (\psi_{n+2}\circ \psi)(\Omega)
\]
for all $n$, which is equivalent to showing that for all $n\geq 1$, 
\[
(\psi_{n+2}^{-1}\circ \psi_{n+1})(\psi(\Omega) )\subseteq  \psi(\Omega).
\]
Note that the above follows from (\ref{Omega}). Thus we are done.


\section{$M$ with zero Kobayashi corank at some point}
In this section we prove the second part of Theorem \ref{main thm}

\subsection{Proof of [(II), Theorem \ref{main thm}]}
We complete the proof in two steps. 

\medskip 
\noindent
{\it Step 1:} In this step we prove that the Kobayashi rank  is identically $d$ on $M$.   Let  $p\in M$ be such that $K_M(p;\xi)\neq 0$ for all $\xi\neq 0$.  Let for each $n\geq 1$,  $\psi_n: M_n \rightarrow B(0;1)$ be an injective mapping with $\psi_n(p)=0$ and $B(0;r) \subseteq \psi_n(\Omega)$. Since $B(0;1)$ is taut,  we can assume that $\psi=\lim_{n\rightarrow \infty} \psi_n: M\rightarrow B(0;1)$ defines a holomorphic map with $\psi(p)=0$. Now observe the following. 

\begin{multline*}
K_{B(0;1)}\left(0;\psi_n'(p)(\xi)\right)\leq K_{\psi_n(M_n)}\left(0, \psi_n'(p)(\xi)\right)=K_{M_n}\left(p, \xi\right)\\
 \leq K_{B(0;r)}\left(0;\psi_n'(p)(\xi)\right).
\end{multline*}
Thus  
\begin{multline*}
\lim_{n\rightarrow \infty} K_{B(0;1)}\left(0; \psi_n'(p)(\xi)\right)\leq \lim_{n\rightarrow \infty}K_{\psi_n(M_n)}\left(0, \psi_n'(p)(\xi)\right) 
=\lim_{n\rightarrow \infty} K_{M_n}(p, \xi)\\
\leq \lim_{n\rightarrow \infty} K_{B(0;r)}\left(0; \psi_n'(p)(\xi)\right),
\end{multline*}
 which in turn gives
\begin{multline*}
 \lVert \psi'(p)(\xi)\rVert = K_{B(0;1)}(0;\psi'(p)(\xi))\leq 
  K_M(p;\xi) \\
\leq  K_{B(0;r)}(0;\psi'(p)(\xi))=\frac{1}{r}  \lVert \psi'(p)(\xi)\rVert.
\end{multline*}
Therefore, $\psi'(p)$ has full rank at $p$ and thus  $\det [\psi'(p)] \neq 0$. By Hurwitz's theorem $\psi'$ has full rank at each point of $M$. Thus $\psi$ is non-constant. Moreover, running the above construction for each $z\in M$, we get that $K_M(z;\xi)\neq 0$ for all $z\in M$ and for all $\xi \neq 0$. Therefore the Kobayashi rank is identically $d$ on $M$.

{\it Step 2:} In this step we prove that $\psi$ is injective, which in turn gives that $M$ has a bounded realization. Then by Thm 2.1 in \cite{DGZ1}, we get  
$$
r\leq \lim_{n\rightarrow\infty}S_{M_n}(z)=S_M(z),
$$ 
for all $z\in M$. Thus upto biholomorphism $M$ is  a bounded regular homogeneous domain. In particular, $M$ is Kobayashi complete. 

Let us assume that  $\psi$ is not injective. Then there exist $z_1, z_2\in M$ such that $\psi(z_1)=\psi(z_2)$. Now since $\psi'$ has full rank at each point of $M$, there is a small ball $\mathcal{B}_{z_1}$ with centre at $z_1$ such that $\psi$ is injective in a slightly larger open set $\mathcal{U}$ containing $\mathcal{B}_{z_1}$. Further we can assume that $\psi$ has no zeros on $\partial \mathcal{B}_{z_1}$. 
Let $\mathcal{U} \cup \{z_2\}\subset \subset M_n$,  for all $n\geq n_0$. Define $\tilde{\psi}_n(z)=\psi_n(z)-\psi_n(z_2)$, for $n\geq  n_0$ and for $z\in  \mathcal{U}$. Similarly define $\tilde{\psi}(z)=\psi(z)-\psi(z_2)$, for $z\in  \mathcal{U}$. Now let $0<\delta=\inf_{z\in \partial \mathcal{B}_{z_1}} \lVert \tilde{\psi}(z)\rVert$. Thus
\[
\lVert \tilde{\psi}_n(z)-\tilde{\psi}(z)\rVert <\delta/2< \lVert \tilde{\psi}(z)\rVert, 
\]
for all sufficiently large $n$ and for  all $z\in \partial \mathcal{B}_{z_1}$. Using higher dimensional Rouch\'{e}'s theorem, $\tilde{\psi}_n$ and $\tilde{\psi}$ have the same number of zeros counted with multiplicities in $\mathcal{B}_{z_1}$. Note that $\tilde{\psi}$ has a zero at $z_1$ but none of the $\tilde{\psi}_n$'s has any zero in $\mathcal{B}_{z_1}$. This is a contradiction. Thus $\psi$ is injective on $M$. Consequently, $M$ has a bounded realization.

\section{$M$ with intermediate Kobayashi corank at some point}

In this section we prove the third part of Theorem \ref{main thm} using techniques of \cite{FS} in our set-up. However, recall that unlike in \cite{FS},  the exhausting domains considered in our case are not necessarily biholomorphic to a fixed domain. Further, if bounded, the class of exhausting domains considered here comes from a much larger class than the one considered in \cite{FS}.

 \subsection{ Proof of [(III), Theorem \ref{main thm}]}
We complete the proof in two steps.

\medskip 
\noindent
{\it Step 1:} Let $p\in M$ be such that the Kobayashi corank is $d-k$. Thus the Kobayashi rank is $k$ at $p$.  In this step we prove that the Kobayashi rank  is identically $k$ on $M$. 
For each $n\geq 1$, let $\psi_n: M_n \rightarrow B(0;1)$ be an injective map with $\psi_n(p)=0$ and $B(0;r) \subseteq \psi_n(M_n)$.  Since $B(0;1)$ is taut, without loss of generality,  $\psi=\lim_{n\rightarrow \infty}\psi_n: M\rightarrow B(0;1)$ is a holomorphic function with $\psi(p)=0$. 

\noindent 
Note that
\begin{equation*}
K_{B(0;1)}\left(0;\psi_n'(p)(\xi)\right) \leq  K_{\psi_n(M_n)}(0; \psi_n'(p)(\xi)) \leq K_{B(0;r)}\left(0;\psi_n'(p)(\xi)\right).
\end{equation*}
This implies 
\begin{eqnarray*}
\lim_{n\rightarrow \infty} K_{B(0;1)}(0;\psi_n'(p)(\xi)) &\leq& \lim_{n\rightarrow \infty}K_{\psi_n(M_n)}(0, \psi_n'(p)(\xi)) \\
 &\leq& \lim_{n\rightarrow \infty} K_{B(0;r)}(0; \psi_n'(p)(\xi)).
\end{eqnarray*}
Therefore, 
\begin{align}\label{rank}
& \lVert \psi'(p)(\xi)\rVert=K_{B(0;1)}(0;\psi'(p)(\xi)) \leq \lim_{n\rightarrow \infty}K_{\psi_n(M_n)}(0,  
\psi_n'(p)(\xi)) \nonumber \\
 &= \lim_{n\rightarrow \infty} K_{M_n}(p;\xi) = K_M(p;\xi)  
\leq K_{B(0;r)}(0;\psi'(p)(\xi))=\frac{1}{r}  \lVert \psi'(p)(\xi)\rVert.
\end{align}

Thus  the Kobayashi rank  at $p$ is $k$ if and only if the rank of $\psi'(p)$ is $k$. This shows that  the map $\psi$ is not identically constant on $M$. 

Let $B(0;r)\subseteq \psi_n(M_n)=U_n$ and let $\phi_n:U_n\rightarrow M_n$ is the inverse of $\psi_n$. For each $n\geq 1$, set $\alpha_n=\psi\circ \phi_n.$ We restrict the domain of $\alpha_n$ to
$B(0,r)$ as points outside $B(0,r)$ might not be in the domain of all $\alpha_n$'s.
Thus each $\alpha_n$ maps $B(0,r)$ into $B(0,1)$. 
Without loss of generality, set $\alpha=\lim_{n\rightarrow \infty} \alpha_n$. Again clearly, $\alpha$  maps $B(0,r)$ into $B(0,1)$. 

\begin{lemma} \label{L1}
Let $M_\psi=\psi^{-1}(B(0,r))$. Then $\alpha\circ \psi=\psi$ on $M_\psi$. 
\end{lemma}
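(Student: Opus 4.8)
```latex
\textbf{Proof proposal.}
The plan is to exploit the convergence $\alpha_n = \psi\circ\phi_n \to \alpha$ together with the defining
relation $\alpha_n\circ\psi_n = \psi$ on $M_n$, and then pass to the limit in a controlled way on the set
$M_\psi = \psi^{-1}(B(0,r))$. First I would observe the elementary algebraic identity underlying the claim:
since $\phi_n = \psi_n^{-1}$ on $U_n \supseteq B(0,r)$, for every point $z \in M_n$ with $\psi_n(z)\in B(0,r)$
we have $\alpha_n(\psi_n(z)) = \psi(\phi_n(\psi_n(z))) = \psi(z)$. So the pre-limit version of the identity
holds \emph{exactly}: $\alpha_n\circ\psi_n = \psi$ wherever it makes sense. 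The whole issue is that $\alpha$ is
evaluated at $\psi(z)$ (the limit argument), not at $\psi_n(z)$, so I cannot simply substitute; I must show that
replacing $\psi_n$ by $\psi$ in the argument of $\alpha_n$, and $\alpha_n$ by $\alpha$, introduces no error in
the limit.

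The key steps, in order, are as follows. Fix $z \in M_\psi$, so $\psi(z)\in B(0,r)$; choose $\rho<r$ with
$\psi(z)\in B(0,\rho)$. Since $\psi_n \to \psi$ uniformly on compact subsets of $M$ (the normal-families /
tautness setup used throughout Step~1), for all large $n$ the point $\psi_n(z)$ lies in $B(0,r)$ as well, and in
fact $\psi_n(z)\to\psi(z)$. Next I would argue that $\alpha_n \to \alpha$ \emph{uniformly} on a fixed compact
neighborhood $\overline{B(0,\rho)}\Subset B(0,r)$; this is legitimate because each $\alpha_n$ maps $B(0,r)$ into
$B(0,1)$, so $\{\alpha_n\}$ is a uniformly bounded, hence normal, family, and the chosen subsequential limit is
$\alpha$. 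Combining uniform convergence of $\alpha_n$ on $\overline{B(0,\rho)}$ with the convergence
$\psi_n(z)\to\psi(z)$ inside that compact set, a standard two-term estimate gives
\[
\big\| \alpha_n(\psi_n(z)) - \alpha(\psi(z)) \big\|
 \le \big\| \alpha_n(\psi_n(z)) - \alpha(\psi_n(z)) \big\|
   + \big\| \alpha(\psi_n(z)) - \alpha(\psi(z)) \big\| \longrightarrow 0,
\]
where the first term vanishes by uniform convergence $\alpha_n\to\alpha$ and the second by continuity of the
single limit function $\alpha$ at $\psi(z)$. Since the left-hand side equals
$\|\psi(z) - \alpha(\psi(z))\|$ by the exact pre-limit identity (the term $\alpha_n(\psi_n(z))=\psi(z)$ is
independent of $n$), we conclude $\alpha(\psi(z)) = \psi(z)$, i.e.\ $\alpha\circ\psi = \psi$ on $M_\psi$.

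I expect the main obstacle to be the bookkeeping around where the maps are defined, rather than any deep
analytic difficulty. The delicate point is that $\alpha_n = \psi\circ\phi_n$ need not be defined on all of
$B(0,r)$ for a \emph{fixed} $n$ (some fibers $\phi_n$ may not yet reach far enough), so I must be careful to
restrict to a compact ball $\overline{B(0,\rho)}$ and to indices $n$ large enough that $\psi_n(z)$ has already
entered this ball and $\phi_n$ is defined there; this is exactly the reason the statement is phrased on
$M_\psi=\psi^{-1}(B(0,r))$ and the domains were cut down to $B(0,r)$ just before the lemma. A secondary point to
handle cleanly is justifying uniform (not just locally uniform) convergence $\alpha_n\to\alpha$ on the compact
set $\overline{B(0,\rho)}$, which follows from the uniform boundedness of $\{\alpha_n\}$ into $B(0,1)$ via
Montel's theorem; once that is in place the limit passage is routine. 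The identity then extends by continuity
from $M_\psi$ to its closure if needed, but for the statement as given, the fiberwise argument above suffices.
```
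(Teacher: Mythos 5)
Your proposal is correct and follows essentially the same route as the paper's proof: both rest on the exact identity $\alpha_n\circ\psi_n=\psi$ (since $\phi_n=\psi_n^{-1}$), the convergence $\psi_n(z)\to\psi(z)\in B(0,r)$, and locally uniform convergence $\alpha_n\to\alpha$ on $B(0,r)$, with your two-term triangle estimate merely spelling out the step the paper compresses into ``$\alpha\circ\psi(p)=\lim\alpha_n\circ\psi_n(p)$''. One minor correction to your commentary: each $\alpha_n=\psi\circ\phi_n$ \emph{is} defined on all of $B(0,r)$ for every fixed $n$, because the squeezing hypothesis gives $B(0,r)\subseteq\psi_n(M_n)=U_n$; the restriction to $B(0,r)$ was made because points \emph{outside} $B(0,r)$ may fail to lie in every $U_n$, so this does not affect your argument.
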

\begin{proof}
Suppose that $p\in M_\psi$. Then $\psi_n(p)$ converge to $\psi(p)\in B(0,r).$ Moreover, $\{\alpha_n\}$ converges uniformly on compacts in $B(0,r)$ to $\alpha.$ Hence $\alpha\circ \psi(p)=\lim \alpha_n\circ \psi_n(p).$ Thus 
$\alpha\circ \psi(p)= \lim \psi \circ \phi_n\circ \psi_n(p)=\lim \psi(p)=\psi(p).$
\end{proof}
Let $k$ be the maximal rank of $\psi$ on $M_\psi$. The maximal rank occurs on a dense open set, so $k$ is also the maximal rank of $\psi$ on $M_\psi.$ Hence for each $n$, the rank of $\alpha_n=\psi\circ \phi_n$ on $B(0,r)$ is at most $k$. So the maximal rank of $\alpha$ is at most $k$ on $B(0,r)$ as well.

\begin{lemma} \label{L2}
Let $Z$ be the closed subvariety of $B(0,r)$ given by $Z=\{q\in B(0,r): \alpha(q)=q\}$. Then the dimension of $Z$ is at most $k$. If $Z$ has dimension $k$ at $q$, then $Z$ is a $k$ dimensional complex manifold on a neighbourhood of $q$. 
\end{lemma}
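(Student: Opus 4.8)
The plan is to treat $Z$ as the fixed-point set of the limit map $\alpha$ and to exploit the bound $\mathrm{rank}(d\alpha)\le k$ recorded just before the statement, together with the functional equation of Lemma \ref{L1}. First I would note that $Z=\{q\in B(0,r):\alpha(q)-q=0\}$ is the common zero set of the $d$ holomorphic functions $q\mapsto \alpha_j(q)-q_j$, so it is a closed analytic subvariety; moreover $\alpha(0)=\lim_n\psi(\phi_n(0))=\lim_n\psi(p)=0$ (since $\phi_n(0)=p$), so $0\in Z$ and $Z\neq\emptyset$.

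For the dimension bound I would argue at a regular point. The regular locus is dense and open in $Z$, so it suffices to bound $\dim_qZ$ there. If $q$ is a regular point on a component of dimension $m:=\dim_qZ$, then $\alpha\equiv\mathrm{id}$ on $Z$ gives $d\alpha_q|_{T_qZ}=\mathrm{id}_{T_qZ}$; in particular $d\alpha_q$ is injective on $T_qZ$, so $\mathrm{rank}(d\alpha_q)\ge m$. Since $\mathrm{rank}(d\alpha)\le k$ throughout $B(0,r)$, this forces $m\le k$, and taking the supremum over components yields $\dim Z\le k$. This step uses only the rank bound and requires no further input.

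The heart of the matter is local smoothness, and here the idea is to show that $\alpha$ is a holomorphic retraction near every fixed point. Fix $q_0\in Z$; by continuity there is a neighbourhood $W$ of $q_0$ with $\alpha(W)\subseteq B(0,r)$, and for $q\in W$ each $\phi_n(q)$ is defined since $W\subseteq B(0,r)\subseteq U_n$. For such $q$ we have $\psi(\phi_n(q))\to\alpha(q)\in B(0,r)$, so for all large $n$ the point $\psi(\phi_n(q))$ lies in $B(0,r)$, whence $\phi_n(q)\in\psi^{-1}(B(0,r))=M_\psi$. Lemma \ref{L1} then gives $\alpha\bigl(\psi(\phi_n(q))\bigr)=\psi(\phi_n(q))$ for these $n$, and letting $n\to\infty$ with $\alpha$ continuous on $B(0,r)$ yields $\alpha(\alpha(q))=\alpha(q)$. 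Thus $\alpha\circ\alpha=\alpha$ on $W$, i.e. $\alpha|_W$ is a holomorphic retraction with fixed-point set $Z\cap W$.

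Finally I would invoke the classical structure theorem for holomorphic retractions: the fixed-point set of an idempotent holomorphic map is a complex submanifold, and at a fixed point $w$ its tangent space equals $\mathrm{im}(d\alpha_w)$, so its local dimension is $\mathrm{rank}(d\alpha_w)\le k$. In particular $Z$ is smooth at every point, with local dimension equal to the rank of $d\alpha$ there; applying this at $q$ with $\dim_qZ=k$ forces $\mathrm{rank}(d\alpha_q)=k$ and exhibits $Z$ as a $k$-dimensional complex manifold near $q$. The main obstacle is the idempotency step: $\alpha$ is defined only as a limit on $B(0,r)$ and a priori takes values in the larger ball $B(0,1)$, so one must first restrict to a neighbourhood of a fixed point on which $\alpha$ returns to $B(0,r)$ before Lemma \ref{L1} applies and can be passed to the limit; some care is also needed to quote the retraction theorem in a form valid for a map that is idempotent but not literally a self-map of a fixed domain.
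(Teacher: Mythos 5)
Your proposal is correct, and it is worth noting that the paper itself does not actually prove Lemma \ref{L2}: it only points to Lemma 4.2 of \cite{FS}. So what you have done is supply the missing self-contained argument, and the two substantive steps check out. The dimension bound is fine: at a regular point $q$ of an $m$-dimensional component, every $v\in T_qZ$ is tangent to a holomorphic curve inside $Z$, on which $\alpha$ is the identity, so $d\alpha_q v=v$ and $\mathrm{rank}\,d\alpha_q\ge m$; combined with the bound $\mathrm{rank}\,d\alpha\le k$ recorded before the lemma this gives $\dim Z\le k$. Your idempotency step is also correct and is the real content: for $q$ near a fixed point $q_0$ you have $\alpha_n(q)=\psi(\phi_n(q))\to\alpha(q)\in B(0,r)$, so $\phi_n(q)\in M_\psi$ for large $n$, Lemma \ref{L1} gives $\alpha(\alpha_n(q))=\alpha_n(q)$, and continuity of $\alpha$ at $\alpha(q)\in B(0,r)$ lets you pass to the limit. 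Your final caveat about $\alpha$ not being a self-map is handled correctly because the retraction structure theorem is purely local at a fixed point: with $P=d\alpha_{q_0}$ one has $P^2=P$ from the chain rule, and the standard linearization $\varphi=P\alpha+(\mathrm{id}-P)(\mathrm{id}-\alpha)$ satisfies $d\varphi_{q_0}=\mathrm{id}$ and $\varphi\circ\alpha=P\circ\varphi$ on a small enough neighbourhood, so $\varphi$ carries $\mathrm{Fix}(\alpha)$ onto an open piece of $\mathrm{Im}\,P$; no global self-map of a fixed domain is ever needed.

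Compared with the route the paper gestures at, your argument is slightly different in mechanism and slightly stronger in conclusion. The argument of \cite{FS} can be run without functional idempotency: if $\dim_qZ=k$, the tangent cone of $Z$ at $q$ is fixed vector-by-vector by $d\alpha_q$ (a first-order Taylor expansion along secants through fixed points), so $d\alpha_q$ fixes a $k$-dimensional subspace; with $\mathrm{rank}\,d\alpha_q\le k$ this forces $d\alpha_q$ to be a rank-$k$ linear projection, and then $d-k$ components of $\mathrm{id}-\alpha$ with independent differentials cut out a $k$-dimensional manifold containing $Z$ locally, which must coincide with $Z$ near $q$ by equality of germ dimensions. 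That infinitesimal route proves exactly the statement of the lemma; your route, by establishing $\alpha\circ\alpha=\alpha$ near \emph{every} point of $Z$, yields the stronger fact that $Z$ is smooth at every point with local dimension $\mathrm{rank}\,d\alpha_q$, from which both assertions of the lemma drop out at once (and which also foreshadows the constant-rank behaviour of $\alpha$ near $\tilde{Z}_\psi$ used in the next lemma of the paper). Both approaches are legitimate; yours buys a cleaner structural statement at the cost of invoking the classical theory of holomorphic idempotents.
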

\begin{proof}
The proof follows by using the same argument as in \cite[Lemma 4.2]{FS}. Hence the proof is omitted here. 
\end{proof}

By definition, $M_\psi=\psi^{-1}(B(0,r))$ and thus $\psi(M_\psi)) \subseteq B(0,r)$. By Lemma \ref{L1}, $\alpha\circ \psi=\psi$ on $M_\psi$. Hence $\psi(M_\psi)\subseteq Z$. Let $\tilde{M}_\psi$ be the connected component of $M_\psi$ containing $p$. Let $\tilde{Z}_\psi$ be the connected component of $Z$ containing $\psi(\tilde{M}_\psi)$.  Since $\psi$ has maximal rank $k$, it follows that $\tilde{Z}_\psi$ has dimension at least $k$ at some point.  This implies that $\tilde{Z}_\psi$ has at least one irreducible component of dimension  $k.$  So we can assume that $\psi(\tilde{M}_\psi)$ is contained in an irreducible component of $\tilde{Z}_\psi$ and $\tilde{Z}_\psi$ is a closed complex submanifold of $B(0,r)$ of dimension $k.$

\begin{lemma} 
The map $\psi$ has constant rank $k$ on the connected component $\tilde{M}_\psi$ of $M_\psi$ containing $p$. Further, the rank of the infinitesimal Kobayashi metric  is identically $d-k$ on $\tilde{M}_\psi$. 
\end{lemma}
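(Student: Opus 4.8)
The plan is to prove the two assertions separately, but both will rest on the rank inequality \eqref{rank} and its pointwise consequence $K_M(z;\xi) \geq \lVert \psi'(z)(\xi)\rVert$, which holds not just at $p$ but at every point of $M$ by running the same construction (with the basepoint shifted). First I would establish that $\psi$ has constant rank $k$ on $\tilde{M}_\psi$. We already know from the previous lemma that $k$ is the \emph{maximal} rank of $\psi$ on $M_\psi$, and that $\psi(\tilde{M}_\psi)$ lands inside the $k$-dimensional complex submanifold $\tilde{Z}_\psi$. Since $\psi$ viewed as a map into $\tilde{Z}_\psi$ cannot have rank exceeding $\dim \tilde{Z}_\psi = k$, the rank is at most $k$ everywhere on $\tilde{M}_\psi$; combined with maximality, the rank equals $k$ on a dense open set. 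The task is then to upgrade this to \emph{identically} $k$, i.e.\ to rule out rank-drop on the exceptional analytic set.

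The mechanism I expect to use for the upgrade is the lower bound $K_M(z;\xi) \geq \lVert \psi'(z)(\xi)\rVert$ together with the constancy of the Kobayashi corank. The corank (hence the rank) of $K_M$ is identically constant on $M$ by the first half of Theorem \ref{main thm}; at $p$ this common value of the Kobayashi rank is $k$, so $K_M(z;\cdot)$ vanishes on a subspace of dimension exactly $d-k$ for every $z$. Now suppose toward contradiction that $\mathrm{rank}\, \psi'(z_0) < k$ at some $z_0 \in \tilde{M}_\psi$. Then $\ker \psi'(z_0)$ has dimension strictly greater than $d-k$. I would like to conclude that the Kobayashi metric degenerates on a subspace larger than dimension $d-k$, contradicting constancy. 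For this I need the \emph{reverse} inequality $K_M(z;\xi) \leq C\lVert \psi'(z)(\xi)\rVert$ in a neighbourhood, or at least that $\xi \in \ker\psi'(z)$ forces $K_M(z;\xi)=0$. The cleanest route is to note that the double inequality in \eqref{rank}, namely $\lVert \psi'(p)(\xi)\rVert \leq K_M(p;\xi) \leq \tfrac{1}{r}\lVert \psi'(p)(\xi)\rVert$, holds at every point when $p$ is taken as the basepoint of the squeezing maps, which pins down $\mathcal{K}_z = \ker \psi'(z)$ exactly. Hence $\dim \ker \psi'(z) = d - k_{\mathrm{rank}} = d-k$ is forced to be constant, i.e.\ $\mathrm{rank}\,\psi'(z) \equiv k$; this simultaneously gives the first claim of the lemma and the second, since $\mathcal{K}_z = \ker\psi'(z)$ then has constant dimension $d-k$, so the Kobayashi rank (the codimension of $\mathcal{K}_z$) is identically $k$ on $\tilde{M}_\psi$.

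The main obstacle, and the point requiring care, is that the double inequality \eqref{rank} was derived at the fixed basepoint $p$ where the squeezing embeddings $\psi_n$ satisfy $\psi_n(p)=0$ and $B(0;r)\subseteq \psi_n(M_n)$; to apply it at an arbitrary $z \in \tilde{M}_\psi$ one must re-center the squeezing maps at $z$, producing a possibly different limit embedding $\psi^{(z)}$ whose kernel a priori need not coincide with $\ker \psi'(z)$. I would therefore argue that although the limiting map depends on the basepoint, the \emph{corank of its derivative} does not: at each $z$ one obtains $\mathcal{K}_z = \{\xi : K_M(z;\xi)=0\} = \ker (\psi^{(z)})'(z)$, and the identity $\alpha\circ\psi=\psi$ on $M_\psi$ from Lemma \ref{L1} ties the original $\psi$ to these local models. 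The technical heart is checking that $\ker\psi'(z) \subseteq \mathcal{K}_z$ via the inequality $\lVert \psi'(z)(\xi)\rVert \leq K_M(z;\xi)$ applied with $\xi \in \ker\psi'(z)$ gives no information directly (it only yields $0 \leq K_M$), so one genuinely needs the opposite containment $\mathcal{K}_z \subseteq \ker\psi'(z)$, which does follow from $\lVert \psi'(z)(\xi)\rVert \leq K_M(z;\xi)$. Equality of dimensions, $\dim\mathcal{K}_z \equiv d-k$ by constancy of corank, then forces $\ker\psi'(z) = \mathcal{K}_z$ and hence $\mathrm{rank}\,\psi'(z)\equiv k$. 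I expect this bookkeeping between the fixed global map $\psi$ and the re-centered local models to be where the real work lies, and I would lean on the argument of \cite[Lemma 4.2]{FS} cited in Lemma \ref{L2} to handle the variety structure uniformly.
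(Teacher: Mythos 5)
Your proposal breaks down at its central step, in two distinct ways. First, the appeal to ``constancy of the corank by the first half of Theorem \ref{main thm}'' is circular: in the intermediate-corank case that constancy is not an available hypothesis but the very conclusion the paper extracts \emph{from} this lemma --- immediately after the proof the paper writes ``Thus the Kobayashi rank is locally constant on $M$, which in turn gives that the Kobayashi rank is identically constant on the whole $M$.'' What is known before the lemma is only that corank $0$ or corank $d$ at any single point would propagate to all of $M$ (Sections 2 and 3), so the corank lies strictly between $0$ and $d$ everywhere; nothing at this stage pins it to the value $d-k$ at points other than $p$. Second, even granting $\dim\mathcal{K}_z\equiv d-k$, your dimension count runs the wrong way: from $\mathcal{K}_z\subseteq\ker\psi'(z)$ and $\dim\mathcal{K}_z=d-k$ you may conclude only $\dim\ker\psi'(z)\ge d-k$, i.e.\ $\mathrm{rank}\,\psi'(z)\le k$, which you already had from $\psi(\tilde{M}_\psi)\subseteq\tilde{Z}_\psi$; a containment whose \emph{smaller} side has known dimension does not force equality, and at a putative rank-drop point $\ker\psi'(z)$ would simply be strictly larger than $\mathcal{K}_z$ with no contradiction. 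To exclude rank-drop you need the reverse inclusion $\ker\psi'(z)\subseteq\mathcal{K}_z$, i.e.\ the upper estimate $K_M(z;\xi)\le C_z\lVert\psi'(z)(\xi)\rVert$ --- and here your re-centering worry is in fact unnecessary: for $z\in M_\psi$ one has $\psi_n(z)\to\psi(z)\in B(0;r)$, so the \emph{original} embeddings eventually place $\psi_n(z)$ in a compact subset of $B(0;r)$, and both inequalities underlying (\ref{rank}) survive at $z$ with locally uniform constants, giving $\mathcal{K}_z=\ker\psi'(z)$ on all of $M_\psi$ honestly. But even with that identification you would still need $\dim\mathcal{K}_z\le d-k$ at $z$, which is once again the assertion under proof.

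The idea that actually closes the gap --- entirely absent from your proposal --- is a semicontinuity-of-rank argument through the maps $\alpha_n=\psi\circ\phi_n$. Since $\alpha$ restricts to the identity on $Z$, its derivative has rank at least $k$ along the $k$-dimensional manifold $\tilde{Z}_\psi$, hence rank exactly $k$ on a neighbourhood $W$ of $\tilde{Z}_\psi$ (the maximal rank of $\alpha$ being $\le k$). Because $\alpha_n\to\alpha$ uniformly on compacts of $B(0,r)$, a nonvanishing $k\times k$ minor of $\alpha'$ near $\psi(q)$ forces the corresponding minor of $\alpha_n'$ to be nonvanishing at $\psi_n(q)$ for all large $n$, so $\mathrm{rank}\,\alpha_n\ge k$ there; the chain rule applied to $\alpha_n=\psi\circ\phi_n$ at $\psi_n(q)$ then gives $\mathrm{rank}\,\psi'\ge k$ at $\phi_n(\psi_n(q))=q$, and combined with the upper bound $\mathrm{rank}\,\psi'\le k$ (which you do have) this yields constant rank $k$ on $\tilde{M}_\psi$. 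The Kobayashi statement is then deduced \emph{afterwards} from the two-sided comparison (\ref{rank}), extended to points of $M_\psi$ as described above --- so the paper's order of deduction is rank of $\psi$ first, Kobayashi rank second, exactly the opposite of yours, and the circular appeal in your version cannot substitute for the missing semicontinuity step.
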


\begin{proof}
Let $\tilde{Z}_\psi$ be as in the previous proof.  Since $\alpha$ has rank $k$ on $Z_\psi$, there is a neighbourhood $W$ of $\tilde{Z}_\psi$ in $B(0,r)$ on which the rank of $\alpha$ is identically $k$. Fix a point $q\in \tilde{M}_\psi.$ Since $\psi(q)\in \tilde{Z}_\psi,$ there is a neighbourhood $U\subset\subset W$ of $q$ such that $\psi_n(q)\in U$ for all large $n$. The sequence of functions $\alpha_n$ converges uniformly on compact subsets of $B(0,r)$ to $\alpha$. Hence for all large enough $n$, the rank of $\alpha_n$ is at least $k$ for every point in $U$. In particular, $\alpha_n=\psi\circ \phi_n$ has rank at least $k$ at $\psi_n(q).$ Hence $\psi$ has rank at least $k$ at $\phi_n\circ \psi_n(q)$. Since the rank of $\psi$ is at most $k,$ we have shown the first part of the lemma, i.e.,  $\psi$ has constant rank $k$ on $\tilde{M}_\psi.$ Further, it follows from (\ref{rank}) that the Kobayashi rank  is locally constant on $\tilde{M}_\psi$. 
\end{proof}

\noindent
Thus the Kobayashi rank is locally constant on $M$, which in turn gives that the Kobayashi rank is identically constant on the whole $M$. 

\medskip 
\noindent
{\it Step 2:}  Let $\psi(\tilde{M}_\psi)=Z_{\tilde{M}_\psi} \subseteq \tilde{Z}_\psi$. Let $q_0\in Z_{\tilde{M}_\psi}$ and let $\hat{U}$ be a small neighbourhood of $q_0$ in $B(0;r)$.
Note that $U=Z_{\tilde{M}_\psi}\cap \hat{U}$ is a $k$-dimensional submanifold. So there exist 
$f_1, \ldots, f_{d-k}, f_{d-k+1},\ldots,f_d: \hat{U}\rightarrow \mathbb{C}$ (these functions give a local change of coordinate near $q_0$) such that $ Z_{\tilde{M}_\psi} \cap \hat{U}=\{f_1= \ldots= f_{d-k}=0\}$. We write $F_1=(f_1,\dots,f_{d-k}), F_2=(f_{d-k+1},\dots, f_d)$ and $F=(F_1,F_2).$ Then the map $F$ has rank $d$ at $q_0$. We define the map $\theta$ on $\hat{U}$ as follows:
$$
\theta(q)= (\alpha(q),F_1(q)).
$$
We show that $\theta'$ has rank $d$ at $q_0$ and hence that $\theta$ is one to one in a
neighborhood. Note that for any fixed $l_0$, $\alpha^{-1}(l_0)$ is an $d-k$ dimensional submanifold in $\hat{U}$. Also since $\alpha$ is identity on $Z$, $\alpha^{-1}(l_0)$ intersects $Z_{\tilde{M}_\psi} $ at a single point. Consider the function defined on  $\alpha^{-1}(l_0)$ as $q\rightarrow F_1(q)$.
Let $\xi\neq 0$ be a tangent vector to $\hat{U}$ at $q_0.$
If $\xi$ is tangent to 
$Z_{\tilde{M}_\psi} $, then $F_1'(\xi)=0$ so $F_2'(\xi)\neq 0.$ Thus the nullspace $N$
of $F_2'(\xi)$ is $d-k$ dimensional and transverse to $Z_{\tilde{M}_\psi} $.
Hence $F_1'\neq 0$ on $N.$ But $F_1'=0$ on the tangent space of$Z_{\tilde{M}_\psi} $.
So $F_1'\neq 0$ on every vector transverse to the tangent space of$Z_{\tilde{M}_\psi}$,
hence on the null space of $\alpha'.$ Therefore $\theta'$ has rank d.
Further, one can modify $\theta$ (and possibly shrink $\hat{U}$ a bit) as 
$$\theta(q)=(\alpha(q),F_1(q)/\epsilon)$$ so that  $\theta: \hat{U} \rightarrow U\times {B}^{d-k}(0;1)$ is holomorphic one-one and onto. 
The sequence of  maps $\theta_m:q\mapsto (\alpha_m(q),F_1(q)/\epsilon)$ converges uniformly on compacts to the map $\theta$ on $\hat{U}$. Thus for all large $m$, the map $\theta_m$ sends a small neighbourhood $\hat{U}_m$ biholomorphically to $U\times {B}^{d-k}(0;1)$, where ${B}^{d-k}(0;1)$ is the $(d-k)$-dimensional unit ball. Now consider the map 
$$
\lambda_m=\phi_m\circ \theta_m^{-1}:U\times {B}^{d-k}(0;1) \rightarrow M_m \subseteq M. 
$$


Now we observe the following. 

\medskip 
\noindent 
(i) $\psi \circ \lambda_m(q,w)=q$ for all $(q,w)\in U\times {B}^{d-k}(0;1)$.  

\begin{proof}
Let $(q,w)\in U\times {B}^{d-k}(0;1)$. Let $p=\theta_m^{-1}(q,w)$. Then 
\begin{align*}
(q,w)=\theta_m(p)=(\alpha_m(p), F_1(p)/\epsilon)=(\psi \circ \phi_m(p),F_1(p)/\epsilon)\\
=(\psi \circ \phi_m \circ \theta_m^{-1}(q,w),F_1(p)/\epsilon)=\psi \circ \lambda_m(q,w).
\end{align*}
Thus the proof follows. 
\end{proof}

(ii) $\lambda_m(U\times {B}^{d-k}(0;1)) \subseteq  \lambda_{m+1}(U\times {B}^{d-k}(0;1))$
for all $m\geq 1$  and $\psi^{-1}(U)=\bigcup_{m=1}^\infty \lambda_m(U\times {B}^{d-k}(0;1))$.

\begin{proof}
 Let $V_m=\lambda_m (U\times {B}^{d-k}(0;1))\subset \subset M_{m+l}$ for large enough 
$l$.  Now $\{\psi_{m+l}\}_{l\geq 1}$ converges uniformly to $\psi$ on $V_m$. By (i) we have $\psi(V_m) \subseteq U$. Since  $F_1\equiv 0$ on $U$, $\psi_{m+l}(V_m) \subseteq \{\lvert F_1\rvert <\epsilon/m\}$ for large $l$. In particular, $\psi_{m+l}(V_m) \subseteq \hat{U}$ for large  $l$. Thus $\psi_{m+l}(V_m)$ is in the domain of $\theta_{m+l}$. Now note that 
\begin{eqnarray}\label{Lamda m}
&&\theta_{m+l} \circ \psi_{m+l} (\lambda_m(q,w)) \nonumber \\
&=& \left(\alpha_{m+l}\circ \psi_{m+l} (\lambda_m(q,w)), (F_1\circ \psi_{m+l} (\lambda_m(q,w))/\epsilon\right) \nonumber \\
&=& \left(\psi\circ \phi_{m+l}\circ \psi_{m+l} (\lambda_m(q,w)), (F_1\circ \psi_{m+l} (\lambda_m(q,w))/\epsilon\right) \nonumber \\
&=& \left(\psi\circ \lambda_m(q,w), (F_1\circ \psi_{m+l} (\lambda_m(q,w))/\epsilon\right) \subseteq U \times \{\lvert w\rvert < 1/m\}.
\end{eqnarray}
Thus $\theta_{m+l} \circ \psi_{m+l} (\lambda_m(q,w))$ with $(q,w)\in U\times {B}^{d-k}(0;1)$ is in the domain of $\lambda_{m+l}$. Further note that 
\[
\lambda_{m+l}\left(\theta_{m+l} \circ \psi_{m+l} (\lambda_m(q,w)\right)=\phi_{m+l}\circ \theta_{m+l}^{-1}\circ \theta_{m+l} \circ \psi_{m+l} (\lambda_m(q,w))=\lambda_m(q,w).
\]
Now since $\theta_{m+l} \circ \psi_{m+l} (\lambda_m(q,w)) \subseteq U \times \{\lvert w\rvert < 1/m\}\subseteq U\times {B}^{d-k}(0;1)$, we have 
\[
V_m=\lambda_m(U\times {B}^{d-k}(0;1)) \subseteq \lambda_{m+l}(U\times {B}^{d-k}(0;1))=V_{m+l}.
\]
Hence without loss of generality, we get $V_m \subseteq V_{m+1}$. 

Let $p\in \psi^{-1}(U)$. Thus $\theta \circ \psi(p)\in U\times \{0\}$. Then 
$\theta_m \circ \psi_m(p)\in U\times {B}^{d-k}(0;1)$ for all large $m$ and  
thus $\lambda_m \circ  \theta_m \circ \psi_m(p)=p \in V_m$. 
Therefore, 
 \[
\psi^{-1}(U)\subseteq \bigcup_{m=1}^\infty \lambda_m(U\times {B}^{d-k}(0;1)).
\]
That $\lambda_m(U\times {B}^{d-k}(0;1)) \subseteq \psi^{-1}(U)$, for all $m\geq 1,$ follows from (i). Hence the proof follows. 
\end{proof}

\medskip
\noindent 
(iii) $\lambda_{m+1}^{-1}\circ \lambda_m(q,0)=(q,0)$ for all $q\in U$. 

\begin{proof}
Note that $\lambda_m^{-1} \circ \lambda_1(q,0)=(q, g_m(q))$ for all $q\in U$ for some holomorphic function $g_m$ on $U$. We modify $\lambda_m$ by defining $\tilde{\lambda}_m: U\times {B}^{d-k}(0;1) \rightarrow M$ as follows:  $\tilde{\lambda}_m(q,w)=\lambda_m(q,2w+g_m(w))$. Thus $\tilde{\lambda}_m(q,0)=\lambda_m \circ T_m(q,0)$ where $T_m(q,w)=(q,2w+g_m(0))$.  Therefore, 
$\tilde{\lambda}_{m+1}^{-1}\circ \tilde{\lambda}_m(q,0)=(q,0)$ for all $q\in U$. Thus (iii) holds for $\tilde{\lambda}_m$. Note that if we replace $\lambda_m$ by $\tilde{\lambda}_m$, (i), (ii), (iv) still hold. 
Thus there is no harm if we replace $\lambda_m$ by $\tilde{\lambda}_m$ in (i), (ii), (iv).
\end{proof}

\medskip
\noindent 
(iv) $\lambda_m\left(\{q\}\times {B}^{d-k}(0;1)\right) \subseteq  \lambda_{m+1}(\{q\}\times {B}^{d-k}(0;1))$
for all $m\geq 1$ and $\psi^{-1}\{q\}= \bigcup_{m=1}^\infty \lambda_m \left(\{q\} \times {B}^{d-k}(0;1)\right)$ for all $q\in U$.

\begin{proof}
It follows from (i) that $\bigcup_{m=1}^\infty \lambda_m \left(\{q\} \times {B}^{d-k}(0;1)\right)\subseteq \psi^{-1}\{q\}$. Now let $z\in \psi^{-1}(q)$. Then we have $\psi^{-1}(q) \subseteq \bigcup_{m=1}^\infty \lambda_m(U\times {B}^{d-k}(0;1))$ by (ii). Hence it follows from (i) that $z\in \lambda_m \left(\{q\} \times {B}^{d-k}(0;1)\right)$, for some $m\geq 1$. 
Next note that $\lambda_m(U\times B^{d-k}(0;1)) \subseteq \lambda_{m+1}(U\times {B}^{d-k}(0;1))$ by (ii). Therefore implementing (i) again we get
\[
\lambda_m(\{q\}\times {B}^{d-k}(0;1)) \subseteq \lambda_{m+1}(\{q\}\times {B}^{d-k}(0;1)). 
\]
\end{proof}

This finishes the proof that $M$ has a local weak vector bundle structure of rank $(d-k)$. Note that statement (iii) is superfluous in this proof. However we use (iii) to prove Proposition \ref{incr union}.

\section*{Proof of Corollary \ref{main cor}} 
\begin{proof}
If the Kobayashi metric vanishes identically at some point $p\in M$, then from Theorem \ref{main thm}, it follows that the   Kobayashi metric vanishes identically on $M$ and $M$ can be written as an increasing union of unit balls. Then the result follows from \cite [Theorem 1.2] {FP}.  

If the Kobayashi metric does not vanish in each tangent direction at some point $p\in M$, then by Theorem \ref{main thm}, the same is true for all points of $M$ and $M$ has a bounded realization. Thus the result follows immediately. 

Now suppose that the corank $k_p$ of the Kobayashi metric at some point $p\in M$ lies strictly in between $0$ and $d$. Let $k_p=d-k$. Then by  Theorem \ref{main thm}, it follows that $k_z \equiv k_p=d-k$ for all $z\in M$.  To complete the proof for this case, we continue using the same notations as in the proof of part (III) of Theorem \ref{main thm}. Let $0\not\equiv f \in L^2(M)$.  Then $f(p)\neq 0$ for some $p\in M$.  Construct $\psi:M \rightarrow B(0;1)$ with $\psi(p)=0$.  Also one can choose $q_0\in \psi(\tilde{M}_0)$ to be $0$. 
 Thus $p\in \psi^{-1}(U)$.  Further, we have 
 \[
\psi^{-1}(U) =\bigcup_{m=1}^\infty \lambda_m\left(U\times {B}^{d-k}(0;1)\right).   
\]
{\it Claim:} $L^2(\psi^{-1}(U))=\{0\}$, which in turn gives $L^2(M)=\{0\}$.  

\medskip 
\noindent
Inspired by \cite [Lemma 4.6] {FS}, we prove the following proposition. 
\begin{proposition}\label{id van Kob}
Let $q\in U$, then $\psi^{-1}\{q\}$ is an increasing union of $(d-k)$-dimensional unit ball. Further, the $(d-k)$-dimensional Kobayashi metric is identically vanishing on $\psi^{-1}\{q\}$. 
\end{proposition}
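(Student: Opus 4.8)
The plan is to show that each fiber $\psi^{-1}\{q\}$ is itself an increasing union of $(d-k)$-dimensional balls by restricting the already-constructed maps $\lambda_m$ to a single fiber, and then to deduce the vanishing of the Kobayashi metric from this exhaustion. First I would fix $q\in U$ and consider the restriction of $\lambda_m$ to the slice $\{q\}\times B^{d-k}(0;1)$. By statement (III), $\psi\circ\lambda_m(q,w)=q$ for all $w$, so $\lambda_m(\{q\}\times B^{d-k}(0;1))\subseteq\psi^{-1}\{q\}$; since each $\lambda_m$ is an injective holomorphic map, the image of the slice is biholomorphic to $B^{d-k}(0;1)$ via $w\mapsto\lambda_m(q,w)$. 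The containment $\lambda_m(\{q\}\times B^{d-k}(0;1))\subseteq\lambda_{m+1}(\{q\}\times B^{d-k}(0;1))$ from statement (I) gives the monotonicity, and the equality $\psi^{-1}\{q\}=\bigcup_m\lambda_m(\{q\}\times B^{d-k}(0;1))$ from statement (I) gives that the union exhausts the whole fiber. This establishes the first assertion directly from the three fiber-level properties (I)--(III).

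For the second assertion, the plan is to exploit exactly the argument of Step 1 in the proof of part (I) of Theorem \ref{main thm}: a manifold that is an increasing union of $(d-k)$-dimensional balls (with the containing maps controlled so that a suitable limit map is constant) has identically vanishing Kobayashi metric. Concretely, I would transfer the estimate used in Step 1: since $\psi^{-1}\{q\}$ is exhausted by the balls $\lambda_m(\{q\}\times B^{d-k}(0;1))$, for a fixed point $x$ and tangent vector $\eta$ one has
\[
K_{\psi^{-1}\{q\}}(x;\eta)=\lim_{m\to\infty}K_{\lambda_m(\{q\}\times B^{d-k}(0;1))}(x;\eta),
\]
and each term is the Kobayashi metric of a set biholomorphic to $B^{d-k}(0;1)$. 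The key is to pull these back through $\lambda_m^{-1}$ to the fixed ball $B^{d-k}(0;1)$ and show the relevant derivative data tends to $0$, mirroring the Schwarz-lemma estimate $\|\psi_n'(0)(\xi)\|\le 1/L$ already used in Section 2. This is where property (iii), flagged in the text as needed precisely for Proposition \ref{id van Kob}, enters: the normalization $\lambda_{m+1}^{-1}\circ\lambda_m(q,0)=(q,0)$ ensures that the transition maps on the fibers fix the center, so one can run the tautness/normal-families argument and conclude that the limiting map collapses tangent vectors, forcing $K_{\psi^{-1}\{q\}}\equiv 0$.

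I expect the main obstacle to be the second assertion rather than the first: the fiber exhaustion by balls does not by itself force the Kobayashi metric to vanish, since an increasing union of balls can be hyperbolic. The vanishing must come from a quantitative statement that the radii of the embedded balls, measured intrinsically in $M$ (equivalently, in the larger fibers $\lambda_{m+1}(\{q\}\times B^{d-k}(0;1))$), grow without bound — exactly the phenomenon that the centering condition (iii) is designed to encode. The delicate point is to verify that the composition $\lambda_{m+1}^{-1}\circ\lambda_m$, when restricted to the fiber, behaves like the maps $\psi_{n+1}^{-1}\circ\psi_n$ of Step 1 in part (I), i.e. that a normalized limit is constant so that the Kobayashi metric on the fiber is dominated by that of a ball of growing radius. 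Once that limit-is-constant claim is secured (via the generalized Schwarz--Pick lemma, as in Step 1), the estimate chain closes and the Kobayashi metric on $\psi^{-1}\{q\}$ vanishes identically, completing the proof of the proposition.
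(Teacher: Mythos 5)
Your treatment of the first assertion coincides with the paper's: it is read off directly from the fiber-level properties (i) and (iv), and nothing more is needed there. The gap is in the second assertion, and it is genuine. The mechanism you propose --- normal families applied to the transition maps $\lambda_{m+1}^{-1}\circ\lambda_m$ on the fiber, with the centering condition (iii) supposedly encoding unbounded growth of the embedded radii --- cannot work, because (iii) carries no quantitative information. A center-fixing holomorphic embedding of the ball into itself satisfies only the Schwarz--Pick bound $\leq 1$ at the center, which is perfectly consistent with hyperbolicity. Concretely, take $\lambda_m(q,w)=(q,r_m w)$ on $U\times B^{d-k}(0;1)$ with $r_m\uparrow 1$: properties (i)--(iv) all hold, each fiber is an increasing union of centered $(d-k)$-balls, yet the fiber is the unit ball, which is hyperbolic. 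So the abstract properties you invoke are insufficient by themselves. Moreover, mimicking Step 1 of Section 2 is circular in this direction: there, the limit map was shown to be constant \emph{because} the metric was assumed to vanish at $p$ (discs with derivative $L\xi$ forced $\lVert\psi_n'(0)(\xi)\rVert\leq 1/L$); here the vanishing at the center of the fiber is exactly what must be proven, so no such discs are available at the outset.

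What closes the argument in the paper is a quantitative inclusion you never invoke: the computation (\ref{Lamda m}) in the proof of (ii) gives $\lambda_m\left(U\times B^{d-k}(0;1)\right)\subseteq\lambda_{m+1}\left(U\times B^{d-k}(0;1/m)\right)$ after relabelling, i.e.\ the image of the $m$-th chart sits inside the $1/m$-ball of the next chart. The paper then runs a dilation trick rather than a normal-families argument: given a holomorphic disc $\gamma$ into $\lambda_m(\{q\}\times B^{d-k}(0;1))$ with $\gamma(0)=\lambda_m(q,0)$ and $\gamma'(0)=Rv$, the composition $\lambda_{m+1}\circ A\circ\lambda_{m+1}^{-1}\circ\gamma$ with $A(q,z)=(q,mz)$ is well defined precisely because of that inclusion, fixes the common center --- this, and only this, is where (iii) enters, via $\lambda_{m+1}(q,0)=\lambda_m(q,0)$, so that the amplification happens at one fixed point --- and has derivative $mRv$ at $0$. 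Hence the $(d-k)$-dimensional Kobayashi metric vanishes at that point in every fiber direction, and the propagation argument of Step 1, Section 2 (the fiber is an increasing union of balls, which have a uniform squeezing constant) spreads the vanishing over all of $\psi^{-1}\{q\}$. Your instinct that the intrinsic radii must grow without bound is correct, but you attribute the growth to (iii); it comes from (\ref{Lamda m}), and without citing that inclusion your proof does not close.
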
 \label{incr union}
\begin{proof}
It follows from (iv) that $\psi^{-1}\{q\}$ is an increasing union of $(d-k)$-dimensional unit ball. Note that by (iv) $\lambda_{m+1}(q,0)=\lambda_m(q,0)$, for all $m\geq 1$. So to show that the $(d-k)$-dimensional Kobayashi metric is identically vanishing on $\psi^{-1}\{q\}$, by Step 1 of the subsection 4.1, it is sufficient to prove that at the point $\lambda_m(q,0)$ the $(d-k)$-dimensional Kobayashi metric is vanishing. 

Let $\Delta$ be the unit disc.  Let $\gamma: \Delta \rightarrow \lambda_m(\{q\}\times {B}^{d-k}(0;1))$ be a holomorphic curve such that $\gamma(0)=\lambda_m(q,0)$ and $\gamma'(0)=Rv$ with $R>0$ and $v\neq 0$.  Consider the map $\lambda_{m+1}\circ A \circ \lambda_{m+1}^{-1} \circ \gamma: \Delta \rightarrow \lambda_{m+1}(\{q\}\times {B}^{d-k}(0;1))$, where $A: \{q\}\times {B}^{d-k}(0;{1}/{m}) \rightarrow \{q\}\times {B}^{d-k}(0;1)$ defined by $(q,z)\mapsto (q,mz)$. Then note that $(\lambda_{m+1}\circ A \circ \lambda_{m+1}^{-1} \circ \gamma) (0)=\lambda_{m+1}(0)$. Further
since $\lambda_{m+1}(q,0)=\lambda_m(q,0)$, 
\begin{eqnarray*}
(\lambda_{m+1}\circ A \circ \lambda_{m+1}^{-1} \circ \gamma)'(0) &=& \lambda_{m+1}'(q,0)A'(q,0) {\left(\lambda_{m+1}^{-1}\right)}'(\lambda_{m+1}(q,0))\gamma'(0)\\
&=& mRv.
\end{eqnarray*} 
Thus the $(d-k)$-dimensional Kobayashi metric is identically vanishing on $\psi^{-1}\{q\}$.
\end{proof}

 \subsection*{Proof of the claim:}
Let $0\not \equiv f\in L^2(\psi^{-1}(U))$. Thus $\int_{\psi^{-1}(U)} {\lvert f \rvert}^2 < \infty$, which in turn gives
$$
\int_{\lambda_m \left(U\times {B}^{d-k}(0;1)\right)} {\lvert f \rvert}^2 < \infty,
$$
for all $m\geq 1$.
Therefore, 
\begin{equation} \label{L2 int Kob}
\int_{U\times {B}^{d-k}(0;1)} {\lvert f \circ \lambda_m \rvert}^2 {\lvert {\rm{Jac}}(\lambda_m)\rvert}^2< \infty,
\end{equation}
for all $m\geq 1$.

\noindent
Note that $\lambda_m(0)=\phi_m \circ \theta_m^{-1}(0)$.
Now since $\alpha_m(0)=\psi\circ \phi_m(0)=\psi(p)=0$ and  $f_i(0)=0$ for $1\leq i \leq (d-k)$, it follows that $\theta_m(0)=0$. Thus  $\lambda_m(0)=\phi_m \circ \theta_m^{-1}(0)=\phi_m(0)=p$ and  $f\circ \lambda_m(0)=f(p)\neq 0$. 

Now we show that  $\lvert {\rm{Jac}} \lambda_m(0)\rvert$ can be taken as large as we want for sufficiently large $m$.
First note that by (IV)  we get
\begin{equation} 
\psi^{-1}\{0\}= \bigcup_{m=1}^\infty \lambda_m \left(\{0\} \times {B}^{d-k}(0;1) \right).
\end{equation}
Further by Proposition \ref{id van Kob} it follows that the $(d-k)$-dimensional Kobayashi metric vanishes identically on $\psi^{-1}\{0\}$.  Since $\lambda_m(0)=p$, it follows that $p\in \lambda_m \left(\{0\}\times {B}^{d-k}(0;1)\right)$, for all $m\geq 1$. Let $R>0$ and let $\zeta\in T_p(\psi^{-1}\{0\})$ with $\lVert\zeta \rVert=1$. Then using the compactness of the unit sphere there exists $m$ independent of $\zeta$ and there exists $\zeta_m: \Delta \rightarrow \lambda_m \left(\{0\} \times {B}^{d-k}(0;1)\right)$ such that  $\zeta_m(0)=p$ and $\zeta_m'(0)=R\zeta$. 
Now $\lambda_m^{-1}\circ \zeta_m: \Delta \rightarrow \{0\} \times {B}^{d-k}(0;1)$ for all $m$. Thus by Schwarz lemma
\begin{align*}
& \lVert (\lambda_m^{-1} \circ \zeta_m)'(0)\rVert \leq 1\\
 \Rightarrow &\; \lVert {(\lambda_m^{-1})}'(p) \zeta_m'(0)\rVert \leq 1\\
\Rightarrow & \; \lVert {(\lambda_m^{-1})}'(p) R \zeta \rVert \leq 1\\
\Rightarrow & \; \lVert {(\lambda_m^{-1})}'(p) \zeta \rVert \leq 1/ R.
\end{align*}
This gives $\lvert{\rm{Jac}}(\lambda_m^{-1})(p)\rvert \rightarrow 0$ as $m\rightarrow \infty$. Thus $\lvert{\rm{Jac}}(\lambda_m)(0)\rvert \rightarrow \infty$. Now from (\ref{L2 int Kob}), it follows that there exists some $K>0$ such that
 \begin{equation*} 
K{\lvert f(p)\rvert}^2 {\lvert{\rm{Jac}}(\lambda_m)(0)\rvert}^2 \leq \int_{U\times {B}^{d-k}(0;1)} {\lvert f \circ \lambda_m \rvert}^2 {\lvert {\rm{Jac}}(\lambda_m)\rvert}^2< \infty.
\end{equation*}
This is a contradiction. Thus $f\equiv 0$. 
\end{proof} 

\section*{Proof of Corollary \ref{Behrens}}
\begin{proof}
Suppose that the Kobayashi corank of $M$ is zero at some point. Then by (II) of Theorem \ref{main thm} it follows that $M$ is Kobayashi hyperbolic.  Now suppose that for each each $n\geq 1$,  $\psi_n: M_n \rightarrow \Omega$ is a biholomorphism. If there exists a $z\in M$ such that $\psi_n(z)$ has a limit point in $\Omega$, then by  \cite [Lemma 3.1] {MV}, $M\cong \Omega$. If there exists no such point in $M$ as above, then $\{\psi_n(z)\}_{n\geq 1}$ accumulates on $\partial \Omega$ for all $z\in M$.  Now the squeezing function of a $C^2$-smooth strictly pseudoconvex domain approaches to $1$ near the boundary (see \cite [Theorem 1.3] {DGZ1}).  Further, $M$ has a bounded realization. Thus by  \cite [Theorem 2.1] {DGZ1}, we have
\[
s_M(z)=\lim_{n\rightarrow\infty}s_{M_n}(z)=\lim_{n\rightarrow\infty} S_\Omega(\psi_n(z))=1.
\]
Therefore, $M$ is biholomorphic to the unit ball $B(0;1)$ (See \cite [Theorem 2.1] {DGZ2}). 

Now suppose that the corank $k_p$ of the Kobayashi metric at some point $p\in M$ lies strictly in between $0$ and $d$. Then by  Theorem \ref{main thm}, it follows that $k_z \equiv k_p=d-k$ for all $z\in M$. Let for each $m\geq 1$, $g_n: M_n \rightarrow \Omega$ be a biholomorphism. If there exists a $p\in M$ such that $\{g_n(p)\}_{n\geq 1}$ is compactly contained in $\Omega$, then using results in the Section 4 of \cite{FS} we get that $M$ has a weak vector bundle structure of rank $d-k$ over a submanifold of $\Omega$. Now suppose that for all $p\in M$ and for any sequence of biholomorphism $g_n: M_n \rightarrow \Omega$, the sequence $\{g_n(p)\}$ approaches to the boundary. Then $S_\Omega(g_n(p)) \rightarrow 1$ as $n\rightarrow \infty$. Consequently, in the proof of  [(III), Theorem \ref{main thm}], we can take the domain of the map $\alpha$ to be $B(0;1)$. Therefore, $M$ has a weak vector bundle structure of rank $d-k$ over a submanifold of $B(0;1)$.
\end{proof}

\textbf{Conflict of interest:} The authors state that there is no conflict of interest. No data sets were generated or analyzed during the current study. 

{\textbf{Acknowledgements:}} The second named author is partially supported by Start-up Research Grant (SRG/2023/001676) and Mathematical Research Impact Centric Support (MTR/2023/001258) from 
Science and Engineering Research Board of India.

\bibliographystyle{amsplain}

\end{document}